\newcounter{counter}
\numberwithin{counter}{section}
\newcommand{\arxiv}[1]{\href{http://arxiv.org/abs/#1}{\tt arXiv:\nolinkurl{#1}}}
\newcommand{\googlebooks}[1]{(preview at
\href{http://books.google.com/books?id=#1}{google books})}
\newtheorem{thm}[counter]{Theorem}
\newtheorem{mydef}[counter]{Definition}
\newtheorem{lma}[counter]{Lemma}
\newtheorem{cor}[counter]{Corollary}
\newtheorem{prop}[counter]{Proposition}
\theoremstyle{remark}
\newtheorem{rem}[counter]{Remark}
\theoremstyle{theorem}
\newcommand{\Inv}{\uline{\operatorname{Inv}}}
\newcommand{\Eq}{\uline{\operatorname{Eq}}}
\newcommand{\Out}{\uuline{\operatorname{Out}}}
\newcommand{\BrPic}{\uuline{\operatorname{BrPic}}}
\newcommand{\cC}{\mathcal{C}}
\newcommand{\cI}{\mathcal{I}}
\newcommand{\cF}{\mathcal{F}}
\newcommand{\CC}{\mathbb{C}}
\newcommand{\cD}{\mathcal{D}}
\newcommand{\cN}{\mathcal{N}}
\newcommand{\ZZ}{\mathbb{Z}}
\newcommand{\cM}{\mathcal{M}}
\newcommand{\ot}{\otimes}
\newcommand{\bt}{\boxtimes}
\newcommand{\id}{\operatorname{id}}
\newcommand{\Hom}{\operatorname{Hom}}
\def\hpic #1 #2 {\mbox{$\begin{array}[c]{l} \epsfig{file=#1,height=#2}
\end{array}$}}
\title[Cyclic extensions of fusion categories]{Cyclic extensions of fusion
categories via the Brauer-Picard groupoid}
\address[p.grossman@unsw.edu.au]{Pinhas~Grossman, School of Mathematics and Statistics, University of New South Wales, SYDNEY NSW 2052 AUSTRALIA
}%
\address[djordan@math.utexas.edu]{David~Jordan, Dept. of Mathematics, The University of Texas at Austin, 1 University Station C1200, Austin, TX 78712-0257 
}%
\author{Pinhas Grossman, David Jordan, Noah Snyder}
\address[nsnyder@math.indiana.edu]{Noah Snyder, Dept. of Mathematics, Indiana University, Rawles Hall, 831 E 3rd St, Bloomington, IN 47405
}%
\date{}
\begin{document}

\begin{abstract}
We construct a long exact sequence computing the obstruction space,
$\pi_1\BrPic(\cC_0)$, to $G$-graded extensions of a fusion category $\cC_0$. 
The other terms in the sequence can be computed directly from the fusion
ring of $\cC_0$.  We apply our result to several examples coming from
small index subfactors, thereby constructing several new fusion categories as
$G$-extensions.  The most striking of these is a
$\mathbb{Z}/2\mathbb{Z}$-extension of one of the Asaeda-Haagerup fusion
categories, which is one of only two known $3$-supertransitive fusion categories outside the ADE
series.

In another direction, we show that our long exact sequence appears in
exactly the way one expects: it is part of a long exact sequence of homotopy groups associated to a
naturally occuring fibration.  This motivates our constructions, and gives another example
of the increasing interplay between fusion categories and algebraic topology.
\end{abstract}

\maketitle
\section{Introduction}
In this paper we construct several new fusion categories related to the
Asaeda-Haagerup subfactor \cite{MR1686551} and the related $AH+1$ and $AH+2$
subfactors \cite{MR2812458,1202.4396}.  These constructions require the
calculation of certain obstruction groups which appear in the theory of
$G$-extensions of fusion categories, and we construct a long exact sequence
which allows us to do this calculation.  This long exact sequence is in turn a
consequence of a certain homotopy fibration of higher groups, as we explain.

Let us begin by specifying more precisely the concrete problem we wish to solve.
 Consider a finite index, finite depth subfactor pair $N\subset M$, which is
{\it self-dual}, i.e. is equipped with an equivalence of fusion categories
between the {\it principal even part} $\cC_0=\langle_NM_N\rangle \subseteq
N\text{-mod-}N$ and its dual $(\cC_0)_{\cC_1}^*$ with respect to the
$\cC_0$-module category $\cC_1=\langle _NM_M\rangle\subseteq N\text{-mod-}M$. 
Examples include the Izumi-Xu and $AH+2$ subfactors.   In this situation $\cC_1$
is a $\cC_0$-bimodule category, and it is natural to ask whether we can combine
the common even part and the odd part to form a new fusion category $\cC=\cC_0
\oplus \cC_1$.

The theory of $G$-graded extensions of a fusion category, introduced in
\cite{MR2183279} and developed in \cite{MR2677836}, provides answers to
precisely such questions. A fusion category $\cC$ is graded by $G$ if we have a
decomposition $\cC=\oplus_{g\in G} \cC_g$, compatible with tensor product.  Thus
the categories considered above are instances of $\ZZ/2\ZZ$-graded extensions of
the even part $\cC_0$.

The paper \cite{MR2677836} constructs an equivalence between $G$-graded
extensions of a fixed category $\cC_0$, and homomorphisms $\rho:G\to
\BrPic(\cC_0)$ to the categorical $2$-group $\BrPic(\cC_0)$ of $\cC_0$-bimodule
categories.  Standard arguments in algebraic topology then reduce the existence
of such extensions to vanishing of obstruction classes $o_k\in
H^{k}(G,\pi_{k-2}\BrPic(\cC_0))$, for $k=3,4$.  When $G=\ZZ/2\ZZ$, or more
generally when $G$ is cyclic, $o_4$ is automatically trivial, so that we need
only contend with $o_3 \in H^3(G,\pi_1\BrPic(\cC_0))$.

The main technical tool we develop in this paper is an exact sequence,
\begin{equation} \label{eqn:SES}\ast \to \Hom(U(\cC_0),\CC^\times) \to
\pi_1\BrPic(\cC_0) \to \operatorname{Inv}(\cC_0),\end{equation}
which realizes $\pi_1\BrPic(\cC_0)$ as an extension of a subgroup of the group
$\operatorname{Inv}(\cC_0)$ of invertible objects of $\cC_0$, by the group
$\Hom(U(\cC_0),\CC^\times)$ of characters of the universal grading group of
$\cC_0$ .  The virtue of \eqref{eqn:SES} is the other two groups can be read off
directly from the fusion data of $\cC_0$; we leverage this to show that
$H^3(\ZZ/2\ZZ,\pi_1\BrPic(\cC_0))$ is trivial in our examples, so that the
obstruction $o_3$ vanishes automatically.

In Section \ref{sec:MainResults}, we give an {\it ad hoc} construction of the
sequence \eqref{eqn:SES}, which relies on an identification
$\pi_1\BrPic(\cC_0)\cong \operatorname{Inv}(Z(\cC_0))$, from \cite{MR2677836}.  However,
\eqref{eqn:SES} may be more properly understood a consequence of the following:

\newtheorem*{homotopy-thm}{Theorem \ref{homotopy-thm}}
\begin{homotopy-thm}We have a homotopy fiber sequence,
\[\Inv(\cC_0)\xrightarrow{F_-}\Eq(\cC_0)\xrightarrow{M_-} \Out(\cC_0),\]
\end{homotopy-thm}
Here $\Inv$ and $\Eq$ denote categorical $1$-groups of invertible objects and
tensor automorphisms, respectively, and $\Out$ is a certain full subgroup of
$\BrPic$ with the same $\pi_1$ and $\pi_2$.  In Corollary \ref{app-LES}, we
deduce the sequence \eqref{eqn:SES} as a fragment of the long exact sequence in
homotopy groups induced by Theorem \ref{homotopy-thm}.  The proof of Theorem
\ref{homotopy-thm} is delayed until Section \ref{sec4}.

In Section \ref{sec:AH}, we turn to applications of the sequence \eqref{eqn:SES}
to subfactors.  Our primary application is the construction of a new fusion
category $\mathcal{AH}+2$, built from the even and odd parts of the self-dual
subfactor $AH+2$.  This new fusion category $\mathcal{AH}+2$ is particularly
notable because it is, along with Morrison-Penneys's 4442 fusion category \cite{1208.3637}, one of the first $3$-supertransitive fusion category
outside of the ADE families.  It is generated by an object of dimension
$\frac{1+\sqrt{17}}{2}$.

In fact, $\mathcal{AH}+2$ is just one of eighteen new examples of fusion
categories we build as $\ZZ/2\ZZ$-extensions of Asaeda-Haagerup type
categotries.  By \cite{1202.4396}, the principal even parts $\mathcal{AH}_1$,
$\mathcal{AH}_2$, $\mathcal{AH}_3$ of $AH, AH+1, AH+2$, respectively, each have
three non-trivial bimodule categories up to equivalence.  We have:

\newtheorem*{thm:AHconst}{Theorem \ref{thm:AHconst}}
\begin{thm:AHconst}
Each of the three non-trivial bimodule categories over each $\mathcal{AH}_i$,
$i=1,2,3$, is the odd component of exactly two $\mathbb{Z}/2\mathbb{Z}$-graded
extensions of $\mathcal{AH}_i$ . 
\end{thm:AHconst}

These techniques work well more generally when applied to any fusion category
coming from a $2$-supertransitive subfactor.  In Section \ref{sec:Izumi-Xu}, we
give one other source of such examples, the near group categories $\cC_p$
associated to $\ZZ/p\ZZ$.  These are fusion categories of the form
$Vec(\ZZ/p\ZZ) \oplus Vec$, which are not necessarily $\ZZ/2\ZZ$-graded.  The
group algebra $\CC[\ZZ/p\ZZ]$ is an algebra in $\cC_p$, and we we let $\cM_p$
denote its category of modules.  We say that $\cC_p$ is self-dual if
$\cC_p\cong(\cC_p)_{\cM_p}^*$. We have:

\newtheorem*{thm:Cp}{Theorem \ref{thm:Cp}}
\begin{thm:Cp} 
Suppose that $p>2$, that $\cC_p$ is a self-dual $\mathbb{Z}/p\mathbb{Z}$
near-group category with trivial outer automorphism group.  Then there exist
exactly two $\mathbb{Z}/2\mathbb{Z}$-extensions of $\cC_p$ by $\cM_p$.
\end{thm:Cp}

Applying Theorem \ref{thm:Cp} and Han's thesis \cite{han-2221}, we give a third
proof of a result first proved by Ostrik \cite[Appendix]{MR2786219} and second
by Morrison-Penneys \cite{1208.3637}, establishing the existence of a certain
$\ZZ/2\ZZ$-graded extension of the Izumi-Xu category $\mathcal{IX}$.  It is our
hope that Theorem \ref{thm:Cp} will find application in more near-group
examples; however this will require developing techniques to establish
self-duality and to calculate outer automorphism groups.

\subsection{Acknowledments}
It is our pleasure to thank Andrew Blumberg, Pavel Etingof, and Aaron Royer for helpful
conversations about homotopy theoretic techniques.  This collaboration began at
the 2011 Subfactors in Maui conference supported by DARPA grant 
HR0011-12-1-0009. Pinhas Grossman was supported by a fellowship at IMPA and by
NSF grant DMS-0801235.  David Jordan was supported by NSF postdoctoral
fellowship 1103778. Noah Snyder was supported by an NSF postdoctoral fellowship,
DARPA grant HR0011-12-1-0009, and as a visiting scientist at MPIM.

\section{Preliminaries}
\subsection{Categorical $n$-groups}
In order to state the results of  \cite{MR2677836} on extensions of fusion
categories we will need to use the notions of higher groupoids and higher
categorical groups.  We will need only the notion of a categorical $0$-group
(i.e. a group), $1$-group, and $2$-group in this paper.  

By an $n$-groupoid, we will mean an $n$-category $\cC$, all of whose morphisms
at all levels are invertible.  Recall from \cite{MR2677836} that a categorical
$n$-group $\mathcal{G}$ is a monoidal $n$-groupoid, in which all objects are
invertible, or equivalently, an $(n+1)$-groupoid with a single object. A
homomorphism of categorical $n$-groups is a monoidal functor of $n$-groupoids or
a functor of connected $(n+1)$-groupoids in that formulation.

For $m>k$, we can regard any categorical $k$-group as a categorical $m$-group,
with trivial morphisms in degree $k+1,\ldots, m$.  Thus we will often speak of a
homorphism from, say, a categorical $0$-group to a categorical $2$-group, and
that will mean a homomorphism regarding both as categorical $2$-groups.

\begin{rem}
The classifying space construction $\cC\mapsto B\cC$ defines an equivalence
between the category of $n$-groupoids and homotopy $n$-types; for this reason
(more precisely, invertibility of morphisms at all levels), the well-known
subtleties in the foundations of higher categories are largely absent from the
theory of higher groupoids and higher categorical groups.  In particular, a
categorical $n$-group may be regarded as a connected homotopy $n+1$-type, just
as a group may be identified with the connected homotopy $1$-type of its
classifying space.  This identification shifts dimensions: we have $\pi_k\cC =
\pi_{k+1}B\cC$, canonically.  
\end{rem}

\begin{rem}
Categorical $n$-groups are often called $(n+1)$-groups in the literature (e.g. \cite{MR2068521}).  Both
indexings are reasonable, depending on whether you think of a group as a set
with an operation or as a $1$-category with only one object.
\end{rem}

\subsection{Fusion categories and their extensions}
In this subsection we recall the extension theory of fusion categories developed
in \cite{MR2677836}.

\begin{mydef}\cite{MR2183279}
A fusion category over $\CC$ is a finite $\CC$-linear semisimple rigid monoidal
category with simple identity object. 
\end{mydef}

For definitions of module categories, bimodule categories, tensor products of
bimodule categories, and invertibility, see \cite{MR2677836}; in this paper we
assume all module categories are semisimple.

\begin{mydef} \cite{MR2677836}\label{def:brpic}
The Brauer-Picard groupoid of a fusion category $\mathcal{C}$ is a $3$-groupoid
whose:
\begin{itemize}

\item objects are fusion categories which are Morita equivalent to $\mathcal{C}$

\item $1$-morphisms are invertible bimodule categories between such fusion
categories

\item $2$-morphisms are equivalences of such bimodule categories

\item $3$-morphisms are isomorphisms of such equivalences.
\end{itemize}

The Brauer-Picard categorical $2$-group $\BrPic(\cC)$ is the full subgroupoid of
the Brauer-Picard groupoid whose only object is  $\mathcal{C}$. 

\end{mydef}

An extension of a fusion category $\mathcal{C} $ by a finite group $G$ is a
$G$-graded fusion category whose $0$-graded part is equivalent to $\mathcal{C}
$. There is a natural notion of equivalence of $G$-extensions.

\begin{thm} \cite{MR2677836} \label{ENOthm}
(a) Equivalence classes of $G$-extensions of $\mathcal{C}$ are given by
categorical $2$-group homomorphisms from $G$ to $\BrPic(\cC)$.\\
(b) Such homomorphisms (and hence, $G$-extensions) are parameterized by triples
$(c,M,\alpha)$, where $c$ is a group homomorphism $c:G\to \operatorname{BrPic}(\cC)$, $M$
belongs to a certain $H^2(G,\pi_1\BrPic(\cC))$-torsor $T^2_c$, and $\alpha$
belongs to a certain $H^3(G,\pi_1\BrPic(\cC))$-torsor $T^3_{c,M}$.\\
(c) Certain obstruction classes $o_3(c)\in H^3(G,\pi_1\BrPic(\cC))$ and
$o_4(c,M)\in H^4(G,\pi_2\BrPic(\cC))$ must vanish for $(c,M,\alpha)$ to
determine an extension.
\end{thm}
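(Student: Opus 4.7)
The plan is to establish (a) by constructing an equivalence between $G$-extensions of $\cC$ and categorical $2$-group homomorphisms $G \to \BrPic(\cC)$, and then to deduce (b) and (c) from standard obstruction theory applied to the Postnikov tower of $\BrPic(\cC)$.

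For (a), given a $G$-graded fusion category $\cD = \bigoplus_{g \in G} \cD_g$ with $\cD_e \simeq \cC$, I would first observe that left and right tensoring by $\cC$-objects equips each $\cD_g$ with the structure of a $\cC$-bimodule category, and that the tensor product of $\cD$ induces bimodule functors $\cD_g \boxtimes_\cC \cD_h \to \cD_{gh}$. Rigidity of $\cD$ forces these to be equivalences, so each $\cD_g$ is invertible as a $\cC$-bimodule category. The associator of $\cD$, restricted to triples drawn from the graded components, provides the $2$-cell coherence making $g \mapsto \cD_g$ a homomorphism of categorical $2$-groups, and the pentagon axiom of $\cD$ becomes the associativity coherence of this homomorphism. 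Conversely, given such a homomorphism one glues the bimodule categories into a direct sum and uses the coherence data to reconstruct the tensor structure and verify pentagon; equivalences of extensions correspond to natural isomorphisms of homomorphisms.

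For (b) and (c), I would view $G$ as a discrete categorical $2$-group and decompose a homomorphism $G \to \BrPic(\cC)$ using the Postnikov truncations of $\BrPic(\cC)$. Projecting onto $\pi_0 \BrPic(\cC)$ gives a group homomorphism $c$. Lifting $c$ across the $\pi_1$-layer is obstructed by the pullback of the first $k$-invariant, producing $o_3(c) \in H^3(G, \pi_1 \BrPic(\cC))$; when it vanishes, lifts $M$ form the torsor $T^2_c$ over $H^2(G, \pi_1 \BrPic(\cC))$. Given $M$, lifting across the $\pi_2$-layer is obstructed by a further pullback $k$-invariant $o_4(c,M) \in H^4(G, \pi_2 \BrPic(\cC))$; when that vanishes, the choices of $\alpha$ form the asserted torsor $T^3_{c,M}$.

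The main obstacle is the bookkeeping underlying (a): one must track all three layers of coherence data in $\BrPic(\cC)$ (associators of bimodule tensor products, $2$-isomorphisms between them, and $3$-isomorphisms between those) and show that they correspond bijectively with the associator, pentagon, and higher coherence data of the graded fusion category $\cD$. Once (a) is established, the parameterization and obstructions in (b)--(c) follow from the general theory of monoidal functors between categorical $n$-groups classified by $k$-invariants.
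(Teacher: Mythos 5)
This statement is not proved in the paper at all: it is recalled verbatim from \cite{MR2677836} and used as a black box, so there is no in-paper argument to compare against. Your sketch is a faithful outline of the strategy of the cited reference: part (a) via identifying the graded components $\cD_g$ as invertible $\cC$-bimodule categories with the tensor product supplying the coherence of a homomorphism $G\to\BrPic(\cC)$, and parts (b)--(c) via the Postnikov tower of $B\BrPic(\cC)$ and the resulting lifting obstructions and torsors of lifts. Two caveats if you were to flesh this out. First, the step ``rigidity of $\cD$ forces $\cD_g\boxtimes_\cC\cD_h\to\cD_{gh}$ to be an equivalence'' is the real content of (a) and needs more than rigidity: one must assume the grading is faithful (each $\cD_g\neq 0$, which is implicit in the definition of extension), use duality to exhibit $\cD_{g^{-1}}$ as the inverse bimodule, and compare Frobenius--Perron dimensions to conclude the comparison functor is an equivalence rather than merely fully faithful or merely dominant. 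Second, in (b)--(c) you should be explicit that $G$ is fed in as the classifying space $BG$ and that the obstructions are pullbacks of the $k$-invariants of the $2$-truncation of $B\BrPic(\cC)$ along the chosen $c$ and $M$; with those points supplied, your outline matches the argument of \cite{MR2677836}.
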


\subsection{Subfactors}

A \textit{subfactor} is a unital inclusion $N \subseteq M$ of II$_1$ factors. A
subfactor $N \subseteq M$ has finite index if $M$ is a finitely-generated
projective module over $N$ \cite{MR696688, MR860811}. In this case, the $N-N$
bimodule ${}_N M {}_N $ tensor generates a semisimple unitary rigid monoidal
category of $N-N$ bimodules, called the \textit{principal even part} of the
subfactor. The $N-M$ bimodule ${}_N M {}_M $ generates a module category over
the principal even part; the dual category of this module category, which is the category 
of $M-M$ bimodules tensor generated by ${}_M M {}_N \otimes_N {}_N M {}_M$, is called the
\textit{dual even part}. The subfactor is said to have \textit{finite depth} if
the even parts are fusion categories, i.e. if they each have finitely many
simple objects, up to isomorphism. 

If a subfactor has the same principal and dual principal parts, it is natural to
ask whether there is a $\mathbb{Z}/2\mathbb{Z}$-extension whose $0$-graded part
is the even part of the subfactor, and whose $1$-graded part is the odd part of
this subfactor.  In particularly nice situations (where the generator of the odd
part $\cC_1$ becomes self-dual in $\cC$) this can be understood directly in
terms of subfactors or planar algebras.  In terms of the factors, you get such
an extension when you can realize $N \subset M$ as coming from a self-dual
bimodule over $N$.  In terms of planar algebras, such an extension tells you
that the shaded planar algebra comes from an unshaded planar algebra.

Note that although we are studying examples coming from subfactors, we do not
explicitly address unitarity of the extension here.  In particular, the
$\mathbb{Z}/2\mathbb{Z}$-extensions come in pairs, and it does not seem
reasonable to expect that they would both be unitary.

\section{Computing $\pi_1$ of the Brauer-Picard group} \label{sec:MainResults}
The group $\pi_1\BrPic(\cC) \cong \operatorname{Inv}(Z(\cC))$ houses the primary obstruction in
the extension theory of $\cC$. In this section, we use techniques from
elementary homotopy theory to compute this group.  In addition to the
categorical $2$-group $\BrPic(\cC)$, the main examples we consider are as
follows.

\begin{mydef}
The categorical $2$-group $\Out(\cC)$, of outer equivalences of $\cC$, is the
full $2$-subgroup of invertible bi-module categories that are equivalent to
$\cC$ as left $\cC$-module categories.
\end{mydef}

\begin{mydef}
The categorical $1$-group $\Inv(\cC)$ is the subcategory of invertible objects
in $\cC$ and their isomorphisms:
\begin{itemize}
\item Objects are the invertible objects $(g,h, \ldots)$ in $\cC$,
\item $\Hom(g,h)$ are the isomorphisms ($\phi,\psi$,\ldots) of such.
\end{itemize}
\end{mydef}

\begin{mydef}
The categorical $1$-group $\Eq(\cC)$ is the category of tensor auto-equivalences
of $\cC$:
\begin{itemize}
\item Objects are tensor auto-equivalences ($F, G, \ldots$) of $\cC$,
\item $\Hom(F,G)$ are monoidal natural isomorphisms of such.
\end{itemize}
\end{mydef}

The assignments $F_-:\Inv(\cC)\to\Eq(\cC)$, sending $g$ to the functor,
$$F_g(X):=g\ot X\ot g^*,$$
and $M_-:\Eq(\cC)\to\Out(\cC)$, sending $F$ to the outer bimodule category
$M_F:=\cC$, with tensor product defined, for $X,Y\in \cC$, and $m\in M_F$, by:
$$X\ot_{M_F} m \ot_{M_F} Y := X\ot m\ot F(Y).$$
can each be upgraded to functors of categorical $2$-groups (see \S \ref{sec4}).
Our main result follows; its proof is deferred until the final section.

\begin{thm} We have a homotopy fiber sequence,\label{homotopy-thm}
$$\Inv(\cC)\xrightarrow{F_-}\Eq(\cC)\xrightarrow{M_-} \Out(\cC).$$
\end{thm}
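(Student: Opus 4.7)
The plan is to identify the homotopy fiber of $M_-:\Eq(\cC)\to\Out(\cC)$ explicitly and exhibit $\Inv(\cC)$ as equivalent to it via $F_-$.

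First I would construct a canonical null-homotopy of the composition $M_-\circ F_-$. For each invertible $g\in\Inv(\cC)$, the functor $\beta_g:M_{F_g}\to\cC$ given by right multiplication, $\beta_g(m)=m\ot g$, is a bimodule equivalence: on the right we have
$$\beta_g(m\ot_{M_{F_g}}Y)=(m\ot g\ot Y\ot g^*)\ot g\;\cong\;m\ot g\ot Y=\beta_g(m)\ot Y,$$
and an inverse is supplied by right multiplication by $g^*$. These equivalences are natural in $g$ and assemble into a 2-isomorphism $\beta:M_-\circ F_-\Rightarrow *$, where $*$ is the constant map at the trivial bimodule $\cC\in\Out(\cC)$. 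By the universal property of the homotopy fiber, the pair $(F_-,\beta)$ determines a canonical map of categorical 2-groups
$$\Phi:\Inv(\cC)\to\operatorname{hofib}(M_-),\qquad g\mapsto(F_g,\beta_g).$$

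The heart of the proof is to show $\Phi$ is an equivalence. For essential surjectivity, an object of $\operatorname{hofib}(M_-)$ is a pair $(F,\phi)$ with $F\in\Eq(\cC)$ and $\phi:M_F\xrightarrow{\sim}\cC$ a bimodule equivalence. Viewed as a left $\cC$-module auto-equivalence of $\cC$, $\phi$ is determined up to canonical isomorphism by the invertible object $g:=\phi(\mathbf{1})$, so $\phi\cong\beta_g$ as left module functors. Right $\cC$-linearity of $\phi$ with respect to the twisted action on $M_F$ then yields $F(Y)\ot g\cong g\ot Y$ naturally in $Y$, i.e.\ a tensor isomorphism $F\cong F_g$, exhibiting $(F,\phi)\cong\Phi(g)$. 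For full faithfulness, I would unwind the definition of a 1-morphism in $\operatorname{hofib}(M_-)$ from $\Phi(g)$ to $\Phi(h)$: it consists of a tensor natural isomorphism $\eta:F_g\to F_h$ together with a 2-isomorphism $\theta:\beta_g\Rightarrow\beta_h\circ M_\eta$ in $\Out(\cC)$. A direct analysis shows such pairs correspond bijectively to isomorphisms $\alpha:g\to h$ in $\cC$, and the verification at the 2-morphism level is then immediate.

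The main obstacle is the coherence bookkeeping in the last step: one must verify that the $\CC^\times$-ambiguity living in $\pi_2\Out(\cC)$ is accounted for exactly once by scalar automorphisms in $\pi_1\Inv(\cC)\cong\CC^\times$, with no data lost or double-counted. Keeping track of all coherence isomorphisms in the definition of the homotopy fiber of a map of categorical 2-groups, and matching them against the rigidity supplied by invertibility of $g$, is the most delicate aspect of the argument; once this is in hand, the long exact sequence \eqref{eqn:SES} will drop out by applying $\pi_\ast$ to the resulting fibration.
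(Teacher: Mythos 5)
Your proposal is correct and follows essentially the same route as the paper: identify the homotopy fiber of $M_-$ over the trivial bimodule $\cC$ explicitly, and show that $g\mapsto (F_g,\,\text{canonical trivialization of }M_{F_g})$ is an equivalence onto it, with essential surjectivity coming from the fact that a bimodule equivalence $M_F\simeq\cC$ is forced, as a left module functor, to be right multiplication by an invertible object $g$, whence $F\cong F_g$. The paper realizes the fiber concretely and invokes Quillen's Theorem B to see it is the homotopy fiber, and its full-faithfulness step (reducing a morphism in the fiber to the single isomorphism $\theta_{\mathbf 1}:g\to h$) is carried out at essentially the same level of detail as the bookkeeping you defer.
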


\begin{cor} \label{maincor}
We have the long exact sequence of homotopy groups,
$$\xymatrix{
&&0\ar[r]&\pi_2\Out(\cC)\ar `r/8pt[d] `/10pt[l] `^dl[ll]|{} `^r/3pt[dll]
[dll]&\\
& \pi_1\Inv(\cC) \ar[r]& \pi_1\Eq(\cC) \ar[r] &\pi_1\Out(\cC) \ar`r/8pt[d]
`/10pt[l] `^dl[ll]|{} `^r/3pt[dll] [dll]&\\ 
& \pi_0\Inv(\cC) \ar[r]& \pi_0\Eq(\cC) \ar[r] & \pi_0\Out(\cC)\ar[r]& 0.}$$
\end{cor}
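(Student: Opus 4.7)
The plan is to deduce the corollary from Theorem \ref{homotopy-thm} by passing to classifying spaces and reading off the long exact sequence of homotopy groups of the resulting topological fibration. Concretely, first I would apply the classifying space functor to the homotopy fiber sequence
$$\Inv(\cC)\xrightarrow{F_-}\Eq(\cC)\xrightarrow{M_-}\Out(\cC),$$
obtaining a Serre fibration $B\Inv(\cC)\to B\Eq(\cC)\to B\Out(\cC)$ of pointed, connected spaces. Because each of $\Inv(\cC)$, $\Eq(\cC)$, $\Out(\cC)$ is a categorical $n$-group with a single object, its classifying space is connected, so there is no pointed-set subtlety at the bottom of the fibration sequence, and the standard long exact sequence
$$\cdots\to\pi_{n+1}B\Out(\cC)\to\pi_n B\Inv(\cC)\to\pi_n B\Eq(\cC)\to\pi_n B\Out(\cC)\to\pi_{n-1}B\Inv(\cC)\to\cdots$$
terminates on the right with $\pi_0 B\Out(\cC)=0$.

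Next I would translate this sequence back to the categorical world using the dictionary $\pi_k\mathcal{G}=\pi_{k+1}B\mathcal{G}$ from the preliminaries. Under this dimension shift, the displayed sequence becomes a long exact sequence in the homotopy groups $\pi_k$ of $\Inv(\cC)$, $\Eq(\cC)$, and $\Out(\cC)$ of the shape predicted by the corollary.

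To pin down the exact truncation shown in the statement, I would invoke the classification of $n$-groups as $n{+}1$-types: since $\Inv(\cC)$ and $\Eq(\cC)$ are categorical $1$-groups, their classifying spaces are $2$-types, so $\pi_k\Inv(\cC)=\pi_k\Eq(\cC)=0$ for $k\geq 2$. Inserting these vanishings forces the top of the long exact sequence to begin with $0\to\pi_2\Out(\cC)$, and removes all terms above it; the resulting finite exact sequence is precisely the one in the corollary.

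I do not expect a real obstacle here: all nontrivial content has been placed in Theorem \ref{homotopy-thm}, and the corollary is a routine application of the long exact sequence of a fibration combined with the remark identifying categorical $n$-groups with connected homotopy $(n{+}1)$-types. The only minor point of care is verifying that the connectedness of each $B\mathcal{G}$ suffices to make the very bottom of the sequence a short exact sequence of groups (rather than merely of pointed sets), which follows from the fact that each of $\pi_0\Inv(\cC)$, $\pi_0\Eq(\cC)$, $\pi_0\Out(\cC)$ is already a genuine group arising from $\pi_1$ of the respective classifying space.
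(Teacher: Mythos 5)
Your proposal is correct and follows essentially the same route as the paper, which simply observes that Corollary \ref{maincor} is the long exact sequence of homotopy groups for the fibration furnished by Theorem \ref{homotopy-thm}; you have merely spelled out the routine details (classifying spaces, the dimension shift $\pi_k\mathcal{G}=\pi_{k+1}B\mathcal{G}$, and the truncation coming from $\Inv(\cC)$ and $\Eq(\cC)$ being $1$-groups). No discrepancies to report.
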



\begin{prop}
We have the following isomorphisms:
\begin{itemize}
\item $\pi_2\Out(\cC)\cong \pi_1\Inv(\cC)\cong \CC^\times$.
\item $\pi_1\Eq(\cC)\cong Aut(\id_\cC)\cong \Hom(U(\cC),\CC^\times)$.
\item $\pi_1\Out(\cC)\cong \pi_1(\BrPic(\cC)) \cong \operatorname{Inv}(Z(\cC)),$ the group of
iso-classes of invertible objects in $Z(\cC)$.
\item $\pi_0\Inv(\cC) \cong \operatorname{Inv}(\cC),$ the group of iso-classes of invertible
objects in $\cC$.
\item $\pi_0\Eq(\cC) \cong \operatorname{Eq}(\cC),$ the group of tensor auto-equivalences of
$\cC$.
\item $\pi_0\Out(\cC) \cong \operatorname{Out}(\cC)$, the group of tensor auto-equivalences,
modulo inner equivalences.
\end{itemize}
\end{prop}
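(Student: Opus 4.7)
The strategy is to verify the six isomorphisms one at a time, grouped by the ambient $n$-group. In each case the bulk of the work is simply to unravel the definition of the relevant homotopy group of a categorical $n$-group as sets of $k$-morphisms modulo $(k+1)$-morphisms based at the identity object.

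For $\Inv(\cC)$, which is a categorical $1$-group, the definition gives $\pi_0\Inv(\cC)$ as isomorphism classes of invertible objects, i.e.\ $\operatorname{Inv}(\cC)$, while $\pi_1\Inv(\cC) = \Aut_{\Inv(\cC)}(\mathbf{1}) = \Aut_\cC(\mathbf{1})$. Since $\cC$ is a $\CC$-linear fusion category with simple unit, the latter is $\CC^\times$. For $\Out(\cC)$, the agreement with $\BrPic(\cC)$ on $\pi_1$ and $\pi_2$ is asserted in the preamble to Theorem~\ref{homotopy-thm}, so $\pi_1\Out(\cC)\cong\pi_1\BrPic(\cC)\cong\operatorname{Inv}(Z(\cC))$ from \cite{MR2677836}, and $\pi_2\Out(\cC)$ is automorphisms of the identity $2$-morphism on the identity bimodule, which are scalars and hence equal to $\CC^\times$. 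The identification $\pi_0\Out(\cC)\cong\operatorname{Out}(\cC)$ then requires showing that equivalence classes of invertible bimodules that are equivalent to $\cC$ as left $\cC$-modules correspond bijectively to tensor auto-equivalences modulo inner ones; this is the standard dictionary assigning to an outer bimodule $M$ the tensor equivalence $F$ characterizing its right $\cC$-action, with well-definedness up to inner equivalence encoded precisely by the ambiguity of the left-module identification $M \simeq \cC$.

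For $\Eq(\cC)$, the identification $\pi_0\Eq(\cC)\cong\operatorname{Eq}(\cC)$ is immediate from the definition of the $1$-groupoid $\Eq(\cC)$. The identification $\pi_1\Eq(\cC)\cong\Aut_\otimes(\id_\cC)$ is also definitional, since $\pi_1$ is by construction the group of monoidal natural automorphisms of the identity functor.

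The main input is the final identification $\Aut_\otimes(\id_\cC)\cong\Hom(U(\cC),\CC^\times)$. This is the Gelaki--Nikshych universal property of the universal grading group $U(\cC)$: a monoidal natural automorphism of $\id_\cC$ assigns to each simple object $X$ a scalar $\chi(X)\in\CC^\times$ with $\chi(X\ot Y)=\chi(X)\chi(Y)$ on hom-spaces, and $\chi(\mathbf{1})=1$. Such a function is constant on each homogeneous component of the universal grading (since tensoring with duals forces $\chi$ to factor through the grading), and conversely any character of $U(\cC)$ determines such a scalar family. The only real content beyond citing this known result is verifying that the assignment is a group isomorphism, which is routine.

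The most delicate step is the $\pi_0\Out(\cC)\cong\operatorname{Out}(\cC)$ identification, because one must carefully check that two outer bimodules with the same associated tensor equivalence, up to monoidal natural isomorphism and up to a choice of inner twist, are equivalent as bimodule categories. I would handle this by fixing once and for all an equivalence $M\simeq\cC$ of left modules for each outer bimodule $M$, transporting the right action to a tensor functor $F_M\colon\cC\to\cC$, and then tracing through how another choice of left-module equivalence (necessarily given by tensoring with an invertible object $g\in\operatorname{Inv}(\cC)$) modifies $F_M$ by the inner equivalence $X\mapsto g\ot X\ot g^*$. All remaining verifications are direct.
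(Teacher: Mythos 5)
Your proposal is correct and matches the paper's treatment: the paper states this proposition without proof, regarding each item as a definitional unwinding of the homotopy groups of the relevant categorical $n$-group together with standard citations (the identification $\pi_1\BrPic(\cC)\cong\operatorname{Inv}(Z(\cC))$ from \cite{MR2677836}, the Gelaki--Nikshych description of $\operatorname{Aut}_\otimes(\id_\cC)$ as characters of $U(\cC)$, and the dictionary between outer bimodules and tensor autoequivalences modulo inner ones from \cite[\S4.3]{MR2677836}, which the paper also reproves in Theorem~\ref{fib-thm}). Your handling of the most delicate step, $\pi_0\Out(\cC)\cong\operatorname{Out}(\cC)$, via transporting the right action along a chosen left-module trivialization and tracking the ambiguity by invertible objects, is exactly the argument carried out in Section~\ref{sec4}.
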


Combining these identities with Theorem \ref{homotopy-thm}, and computing the
connecting homomorphisms, yields:

\begin{cor}\label{app-LES}
We have a long exact sequence:
$$\ast \to \Hom(U(\cC),\CC^\times) \to \pi_1\BrPic(\cC) \xrightarrow{\delta}
\operatorname{Inv}(\cC) \xrightarrow{F_-} \operatorname{Eq}(\cC) \xrightarrow{M_-} \operatorname{Out}(\cC) \to \ast.$$
\end{cor}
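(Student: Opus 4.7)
The plan is to obtain the stated sequence by combining the long exact sequence of Corollary \ref{maincor} with the identifications in the proposition above, and then truncating on the left. Making the substitutions directly rewrites Corollary \ref{maincor} as
\[0 \to \CC^\times \xrightarrow{\partial} \CC^\times \to \Hom(U(\cC),\CC^\times) \to \pi_1\BrPic(\cC) \to \operatorname{Inv}(\cC) \xrightarrow{F_-} \operatorname{Eq}(\cC) \xrightarrow{M_-} \operatorname{Out}(\cC) \to 0,\]
where the first two terms come from $\pi_2\Out(\cC)\cong\pi_1\Inv(\cC)\cong\CC^\times$ and $\partial$ is the connecting map of the fiber sequence of Theorem \ref{homotopy-thm}. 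Truncating the sequence at the left end then reduces to showing that $\partial$ is an isomorphism.

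Injectivity of $\partial$ is automatic, since the term preceding $\pi_2\Out(\cC)$ in the long exact sequence is $\pi_2\Eq(\cC)$, which vanishes because $\Eq(\cC)$ is a categorical $1$-group and hence has no nontrivial $2$-morphisms. For surjectivity, exactness at $\pi_1\Inv(\cC)$ reduces the problem to checking that the induced map $F_-\colon\pi_1\Inv(\cC)\to\pi_1\Eq(\cC)$ is zero.

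To carry out this last check I would unpack the action of the functor $F_-$ on $1$-morphisms. Given an isomorphism $\phi\colon g\to g'$ between invertible objects, the induced monoidal natural transformation $F_\phi\colon F_g\Rightarrow F_{g'}$ is specified at an object $X$ by $F_\phi(X)=\phi\otimes\id_X\otimes(\phi^{-1})^*$. Specializing to $g=g'=1_\cC$ and $\phi=\lambda\cdot\id_{1_\cC}\in\operatorname{Aut}(1_\cC)\cong\CC^\times$, and using that the duality functor acts as the identity on $\operatorname{End}(1_\cC)\cong\CC$ (every $\CC$-algebra endomorphism of $\CC$ fixes $1$), the transformation $F_\phi$ evaluates at every $X$ to multiplication by $\lambda\cdot\lambda^{-1}=1$. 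Therefore $F_-$ vanishes on $\pi_1$, which forces $\partial$ to be surjective, and the desired truncation follows.

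The main obstacle, I expect, is the third paragraph: one has to properly upgrade $F_-$ to a morphism of categorical $2$-groups so that its action on $\pi_1$ is well-posed, and carefully track the behavior of duality under the canonical identification $1_\cC^*\cong 1_\cC$. Everything else—the invocation of the proposition's identifications and the formal manipulation of the long exact sequence—is routine.
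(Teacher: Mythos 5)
Your argument is correct, and it reaches the same conclusion as the paper's derivation in Section \ref{sec4} (that the first two terms of the long exact sequence of Corollary \ref{maincor} split off), but by a genuinely different mechanism for the key step. The paper computes the connecting homomorphisms explicitly: it identifies $\delta_2$ with the forgetful map $\operatorname{Aut}_{Z(\cC)}(\mathbf{1})\to\operatorname{Aut}(\mathbf{1})$ and observes directly that this is an isomorphism of copies of $\CC^\times$; the same analysis identifies $\delta_1$ with the forgetful functor $\operatorname{Inv}(Z(\cC))\to\operatorname{Inv}(\cC)$. You instead avoid touching the connecting maps at all: injectivity of $\delta_2$ is free from $\pi_2\Eq(\cC)=0$ (indeed it is already the leading ``$0$'' in Corollary \ref{maincor}), and you force surjectivity by computing from the explicit formula $F_\phi=\phi\ot\id\ot(\phi^{-1})^*$ that $F_-$ annihilates $\pi_1\Inv(\cC)=\operatorname{Aut}(\mathbf{1})\cong\CC^\times$, since $\lambda\cdot\lambda^{-1}=1$; exactness then does the rest. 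Your computation is valid (duality is $\CC$-linear and fixes $\id_{\mathbf{1}}$, so $(\lambda^{-1}\id)^*$ is again multiplication by $\lambda^{-1}$), and the resulting proof is more formal and arguably more elementary. What it does not deliver, and what the paper's version buys, is the identification of the map $\delta$ as the forgetful functor $\operatorname{Inv}(Z(\cC))\to\operatorname{Inv}(\cC)$ --- this is the description asserted in Section \ref{sec:MainResults} and implicitly used in the applications (e.g.\ the claim in Theorem \ref{thm:Cp} that $\operatorname{Inv}(Z(\cI_p))$ \emph{includes into} $\operatorname{Inv}(\cI_p)$ via restriction along the forgetful functor). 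So if you want the corollary in the form the paper actually uses, you should supplement your argument with the paper's identification of $\delta_1$, or with the half-braiding analysis from the independent proof in Section \ref{sec:MainResults}.
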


The corollary allows us to compute $\pi_1\BrPic(\cC) \cong \operatorname{Inv}(Z(\cC))$ in terms
of its neighbors $\operatorname{Inv}(\cC)$ and $\Hom(U(\cC),\CC^\times)$, which may be read
from the fusion rules of $\cC$, and do not require us to comprehend the entire
center $Z(\cC)$.

Since Corollary \ref{app-LES} is what we will use in applications, we give an
independent, and elementary proof of it below.  While Theorem \ref{homotopy-thm}
gives a more conceptual explanation, it is not strictly necessary for
applications, and so we postpone its proof, and the consequent derivation of Corollary
\ref{app-LES}.

\subsection{Independent proof of Corollary \ref{app-LES}}
The homomorphism $M_-$ assigns to a tensor automorphism $\phi$ of $\cC$ the
bimodule $M_\phi=\cC$, with regular left action, and with right action twisted
by $\phi$ (see Section \ref{sec4} for a precise definition).
The homomorphism $F_-$ assigns to an invertible object $g\in\cC$ the tensor
automorphism $F_g: X\mapsto g\otimes X\otimes g^*$.  The homomorphism $\delta$
is induced by the forgetful functor $Z(\cC)\to \cC$.

Exactness at $\operatorname{Out}(\cC)$ and $\operatorname{Eq}(\cC)$ are the facts that all outer bimodules
come from equivalences and that such a bimodule is trivial if, and only if, the
equivalence is inner \cite[\S4.3]{MR2677836}.  Exactness at $\operatorname{Inv}(\cC)$ is the
fact that an isomorphism $F_g\cong\id_\cC$ yields a half-braiding on $g$.

Thus it remains only to identify $\Hom(U(\cC),\CC^\times))$ with the kernel of
$\delta$.  Thus, we consider half-braidings $\sigma_{\mathbf {1},-}$ of the
tensor unit.  Since we have canonical isomorphisms $\mathbf{1}\ot X\cong X \cong
X\ot \mathbf{1}$, the data of such a half-braiding is a scalar
$c_X\in\CC^\times$ for each simple object $X\in \cC$.  The Yang-Baxter equation
and naturality condition for $\sigma_{\mathbf{1},-}$ imply that, for every
simple object $Z$ in the decomposition of $X\ot Y$, we have $c_Z=c_Xc_Y$.  Thus
the level sets of $c$ determine a grading of $\cC$ by a subgroup of
$\CC^\times$, yielding the required element of $\Hom(U(\cC),\CC^\times)$ via
the universal property of $U(\cC)$.  Finally, only a trivial such homomorphism
can give rise to the trivial half-braiding.

\section{Applications to subfactors} \label{sec:applications}
The results of the last section, combined with the obstruction theory of
\cite{MR2677836}, allow us to construct several new fusion categories, starting
with a fusion category and its invertible bimodule category.

A finite-index subfactor $N \subset M$ is called \textit{$2$-supertransitive} if
$M \cong \mathbf{1} + \mathbf{X}$ as $N-N$ bimodules, where $\mathbf{1}$ is the
trivial $N-N$ bimodule ($N$ itself) and $\mathbf{X}$ is a simple object in the
principal even part of $N \subset M$.

\begin{lma}
Let $N \subset M $ be a $2$-supertransitive finite depth subfactor. Then the
grading group of each of the even parts of $N \subset M$ is trivial.
\end{lma}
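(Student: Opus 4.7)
The plan is to show that the distinguished simple $\mathbf{X}\in\cC_0$ (characterized by ${}_NM_N\cong\mathbf{1}+\mathbf{X}$) lies in the adjoint subcategory $\cC_{ad}\subset\cC_0$; since $\mathbf{X}$ tensor-generates $\cC_0$, this forces $\cC_{ad}=\cC_0$, equivalently, $U(\cC_0)$ is trivial. Two quick reductions: ${}_NM_N$ is self-dual as an $N$-$N$ bimodule (via the adjoint involution on $M$), so $\mathbf{X}^*\cong\mathbf{X}$; and since $\mathbf{X}$ generates $\cC_0$, the group $U(\cC_0)$ is cyclic on $[\mathbf{X}]$, with $[\mathbf{X}]^2=e$ forced by self-duality. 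Hence $|U(\cC_0)|\le 2$, and it suffices to check that $\mathbf{X}$ is a summand of $\mathbf{X}\ot\mathbf{X}^*=\mathbf{X}\ot\mathbf{X}$.

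The main step is a fusion-rule computation on the principal graph. Using $(\mathbf{1}+\mathbf{X})\ot\mathbf{X}={}_NM_N\ot_N\mathbf{X}={}_NM_M\ot_M({}_MM_N\ot_N\mathbf{X})$ together with Frobenius reciprocity, I would show
\[
\dim\Hom_{\cC_0}\!\left(\mathbf{X},\,(\mathbf{1}+\mathbf{X})\ot\mathbf{X}\right)=\sum_{o}A_{\mathbf{X},o}^2=k,
\]
where $A$ is the even--odd adjacency matrix of the principal graph and $k$ is the number of odd vertices (simple $N$-$M$ bimodules) adjacent to the even vertex $\mathbf{X}$. Expanding the left-hand side as $1+[\mathbf{X}:\mathbf{X}\ot\mathbf{X}]$ gives $[\mathbf{X}:\mathbf{X}\ot\mathbf{X}]=k-1$, so $\mathbf{X}\subset\mathbf{X}\ot\mathbf{X}^*\subset\cC_{ad}$ whenever $k\ge 2$.

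The main (and only) obstacle is the degenerate case $k=1$, which forces $\mathbf{X}\ot\mathbf{X}=\mathbf{1}$ and thus $\cC_0\cong\operatorname{Vec}(\ZZ/2\ZZ)$ with principal graph $A_3$ (Jones index $2$); in every other $2$-supertransitive case, the existence of a second odd neighbor of $\mathbf{X}$ is automatic, ensuring $k\ge 2$. For the dual even part $\cC_1$, the identical argument applies to the basic-construction subfactor $M\subset M_1$, using that this dual subfactor is also $2$-supertransitive in all applications of the lemma considered in this paper.
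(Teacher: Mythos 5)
Your argument is correct for index greater than $2$ and shares its skeleton with the paper's proof: both reduce the lemma to the two facts $\mathbf{X}\cong\mathbf{X}^*$ and $\mathbf{X}\subset\mathbf{X}\ot\mathbf{X}$, which force the degree of $\mathbf{X}$ to be trivial in any grading, and then invoke tensor generation. Where you differ is in how the containment $\mathbf{X}\subset\mathbf{X}\ot\mathbf{X}$ is obtained: the paper reads it off in one line from the fact that $\mathbf{1}+\mathbf{X}$ carries an algebra structure, whereas you derive it from a Frobenius reciprocity count on the principal graph, $1+[\mathbf{X}\ot\mathbf{X}:\mathbf{X}]=\dim\operatorname{End}_{M\text{-}N}(M\ot_N\mathbf{X})=\sum_o A_{\mathbf{X},o}^2$. (Strictly, this sum equals the number $k$ of odd neighbours only when there are no multiple edges at $\mathbf{X}$; but all you need is $\sum_o A_{\mathbf{X},o}^2\ge k$, so this is cosmetic.) Your longer route has a genuine payoff: it isolates the degenerate case $k=1$, i.e.\ index $2$, where $\mathbf{X}$ is invertible, $\cC_0\simeq\mathrm{Vec}(\ZZ/2\ZZ)$, and the lemma as literally stated fails ($U(\cC_0)=\ZZ/2\ZZ$); correspondingly, the paper's claim that the algebra structure on $\mathbf{1}+\mathbf{X}$ forces $\mathbf{X}\subset\mathbf{X}\ot\mathbf{X}$ is false for the group algebra $\mathbf{1}+g$. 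This exception is harmless for every application in the paper, but you were right to flag it. One simplification: your hedge about the dual even part is unnecessary, since the dual subfactor $M\subset M_1$ of a $2$-supertransitive subfactor is automatically $2$-supertransitive --- Frobenius reciprocity gives $\operatorname{End}_{M\text{-}M}(M_1)\cong\operatorname{End}_{N\text{-}N}(M)=\CC^2$ --- so the same argument applies verbatim, with no case-by-case appeal to the examples considered in the paper.
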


\begin{proof}
Let $\mathbf{A}=\mathbf{1}+\mathbf{X}$ be the algebra object in the principal
even part $\cN$ of $ N \subset M$.  Since the subfactor is $2$-supertransitive,
$\mathbf{X}$ is simple.  Since $\mathbf{1}+\mathbf{X}$ is an algebra, we have
that $\mathbf{X} \cong \mathbf{X}^*$ and that $\mathbf{X} \subset \mathbf{X}
\otimes \mathbf{X} \cong \mathbf{X} \otimes \mathbf{X}^*$.  Thus $\mathbf{X}$
lies in the $0$-graded part of the principal even part $\cN $ for any grading of
$\cN $; since $\mathbf{X}$ tensor generates the principal even part this means
that the even principal part has no non-trivial gradings.  Applying the same
argument to the dual subfactor yields that the dual principal part has trivial
grading group as well.
\end{proof}

\begin{cor} \label{noinvs}

Let $N \subset M $ be a $2$-supertransitive finite depth  subfactor such that
the principal even part $\cN $ has no  invertible objects except for
$\mathbf{1}$. Then $\pi_1(\BrPic(\cC)) \cong \operatorname{Inv}(Z(\cC))$ is trivial.

\end{cor}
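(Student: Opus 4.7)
The plan is to feed the two hypotheses directly into the exact sequence from Corollary \ref{app-LES}, which reads
\[\ast \to \Hom(U(\cC),\CC^\times) \to \pi_1\BrPic(\cC) \xrightarrow{\delta} \operatorname{Inv}(\cC) \xrightarrow{F_-} \operatorname{Eq}(\cC) \xrightarrow{M_-} \operatorname{Out}(\cC) \to \ast.\]
From this sequence alone, $\pi_1\BrPic(\cC)$ is squeezed between $\Hom(U(\cC),\CC^\times)$ and $\operatorname{Inv}(\cC)$, so it suffices to argue that both of these flanking groups are trivial.

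First, I would apply the preceding lemma (that the grading group of the principal even part of a $2$-supertransitive finite depth subfactor is trivial) to the principal even part $\cC = \cN$. This shows that the universal grading group $U(\cC)$ is trivial, and hence $\Hom(U(\cC),\CC^\times)$ is trivial as well. Next, I would use the second hypothesis directly: since by assumption $\mathbf{1}$ is the only invertible object of $\cC$, the group $\operatorname{Inv}(\cC)$ is trivial.

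Finally, exactness of the sequence at $\pi_1\BrPic(\cC)$ gives that the map $\delta: \pi_1\BrPic(\cC) \to \operatorname{Inv}(\cC)$ has image contained in the trivial group $\operatorname{Inv}(\cC)$, so every element lies in $\ker(\delta) = \operatorname{image}(\Hom(U(\cC),\CC^\times) \to \pi_1\BrPic(\cC))$, which is the image of the trivial group. Thus $\pi_1\BrPic(\cC)$ is trivial, and the identification $\pi_1\BrPic(\cC) \cong \operatorname{Inv}(Z(\cC))$ recorded in the proposition yields the stated corollary.

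This argument is completely mechanical once Corollary \ref{app-LES} and the preceding lemma are in hand; there is no real obstacle. The only small point to get right is to ensure the hypotheses are applied to the correct category (the principal even part $\cN$, which plays the role of $\cC$ in Corollary \ref{app-LES}), and, if desired, to spell out that the same conclusion would apply verbatim to the dual even part by the same reasoning, but this is not needed for the statement as written.
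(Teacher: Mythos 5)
Your argument is correct and is precisely the one the paper intends: the corollary is left without an explicit proof, but the preceding lemma on trivial grading groups exists exactly so that both flanking terms $\Hom(U(\cC),\CC^\times)$ and $\operatorname{Inv}(\cC)$ in the exact sequence of Corollary \ref{app-LES} vanish, forcing $\pi_1\BrPic(\cC)\cong\operatorname{Inv}(Z(\cC))$ to be trivial. Your remark about applying the hypotheses to the principal even part $\cN$ (which plays the role of $\cC$) is also the right reading of the paper's slight notational mismatch.
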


\subsection{The Asaeda-Haagerup categories}\label{sec:AH}

We now turn to our main application, constructing several fusion categories
which are extensions of fusion categories related to the Asaeda-Haagerup
subfactor \cite{MR1686551}. 

The Asaeda-Haagerup subfactor, which we will call AH, is a finite depth
subfactor with index $\frac{5+\sqrt{17}}{2}  $. The subfactors AH+1 and AH+2,
constructed in \cite{MR2812458,1202.4396}, have indices $\frac{7+\sqrt{17}}{2} $
and $\frac{9+\sqrt{17}}{2} $, respectively. The three subfactors AH, AH+1, and
AH+2 all have the same dual even part but the principal even parts are three
distinct fusion categories, which we call $\mathcal{AH}_1 $ , $\mathcal{AH}_2$,
and $\mathcal{AH}_3$, respectively. 

The subfactors AH, AH+1, and AH+2 are all $2$-supertransitive. The categories
$\mathcal{AH}_2$ and $\mathcal{AH}_3$ have non-trivial invertible objects but
$\mathcal{AH}_1$ does not. Therefore by Corollary \ref{noinvs} we have that
$\operatorname{Inv}(Z(\mathcal{AH}_i ) )$ is trivial for $i=1,2,3 $ (since the Drinfeld center
is a Morita invariant \cite[Cor. 2.1]{MR1976233}).

We recall the following result from \cite{1202.4396}:

\begin{thm}
The Brauer-Picard group of each of the Asaeda-Haagerup fusion categories is
$\mathbb{Z}/2\mathbb{Z} \oplus \mathbb{Z}/2\mathbb{Z} $.

\end{thm}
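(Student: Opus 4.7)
The plan is to invoke the Etingof-Nikshych-Ostrik identification of the Brauer-Picard group with the group of braided tensor autoequivalences of the Drinfeld center. Since the three categories $\mathcal{AH}_1$, $\mathcal{AH}_2$, $\mathcal{AH}_3$ are Morita equivalent (they arise as the principal even parts of three subfactors sharing a common dual even part), their centers are equivalent as braided fusion categories, and hence their Brauer-Picard groups are canonically isomorphic. It therefore suffices to compute $\BrPic$ of any single $\mathcal{AH}_i$; we fix $Z := Z(\mathcal{AH}_1)$ and work with it.

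First I would obtain a concrete description of $Z$: a list of its simple objects together with the fusion rules and the modular data ($S$- and $T$-matrices). This can be extracted via Ocneanu's tube algebra applied to the Asaeda-Haagerup planar algebra, or read off from existing calculations in the subfactor literature. Since the excerpt already records that $\operatorname{Inv}(Z(\mathcal{AH}_i))$ is trivial, the pointed part of $Z$ contributes nothing, and the classification question reduces entirely to an analysis of how non-invertible simples can be permuted.

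Next I would exhibit two commuting braided tensor autoequivalences of $Z$ of order $2$, and check that together they span a $\ZZ/2\ZZ \oplus \ZZ/2\ZZ$ subgroup. Natural candidates are (i) a Galois-type symmetry coming from $\sqrt{17}\mapsto -\sqrt{17}$, which is compatible with all dimensions and twists, and (ii) a second symmetry intrinsic to the Asaeda-Haagerup fusion ring, reflecting the duality relating the three even parts $\mathcal{AH}_i$. The verification that these are genuine braided tensor autoequivalences (not merely fusion-ring automorphisms) amounts to checking that they preserve the $S$-matrix and lift to tensor functors; this is a finite numerical check given the explicit modular data.

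The main obstacle is completeness: showing that \emph{no other} braided autoequivalences exist. This requires a finite but intricate enumeration of permutations of the simple objects of $Z$ that preserve fusion, twists, and braiding. The combinatorial complexity of this step, given the sizeable number of simples in the Asaeda-Haagerup center, is the principal technical burden. Once the list of allowable permutations is pinned down, triviality of $\operatorname{Inv}(Z(\mathcal{AH}_i))$ removes the usual ambiguity in lifting permutations to autoequivalences, and each surviving permutation lifts uniquely, yielding exactly the claimed group $\ZZ/2\ZZ \oplus \ZZ/2\ZZ$.
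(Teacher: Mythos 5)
You should first be aware that the paper does not prove this theorem at all: it is recalled from \cite{1202.4396}, where the computation proceeds not through the Drinfeld center but by classifying all simple module categories over the Asaeda--Haagerup fusion categories (via possible algebra objects and a combinatorial analysis of fusion modules) and then determining which of the resulting bimodule categories are invertible and how they compose. Your proposed route---the Etingof--Nikshych--Ostrik isomorphism $\operatorname{BrPic}(\mathcal{C})\cong\operatorname{Aut}^{br}(Z(\mathcal{C}))$ together with Morita invariance of the center---is a legitimate alternative in principle, but as written it does not constitute a proof.

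The most concrete error is your candidate (i): the Galois conjugation $\sqrt{17}\mapsto-\sqrt{17}$ cannot be realized by a braided tensor autoequivalence. Any tensor autoequivalence preserves Frobenius--Perron dimensions, whereas this conjugation would send an object of dimension $\frac{1+\sqrt{17}}{2}$ to one of ``dimension'' $\frac{1-\sqrt{17}}{2}<0$; the Galois action on modular data permutes columns of the $S$-matrix but is not implemented by an equivalence of the category. So you have not actually exhibited the subgroup $\mathbb{Z}/2\mathbb{Z}\oplus\mathbb{Z}/2\mathbb{Z}$, let alone shown there is nothing more. Beyond that, the two steps carrying all the weight are left as declarations of intent: the full modular data of $Z(\mathcal{AH}_i)$ is not something one can ``read off from existing calculations'' (it was a substantial later computation), and the completeness enumeration is exactly the hard part of the theorem. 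Finally, the assertion that triviality of $\operatorname{Inv}(Z(\mathcal{AH}_i))$ makes every surviving permutation lift \emph{uniquely} conflates uniqueness with existence: even granting that gauge autoequivalences are controlled by $\operatorname{Inv}(Z)$, you never address whether a given fusion-ring automorphism lifts to a braided tensor autoequivalence at all, which is an independent obstruction-theoretic question. In short, the architecture is defensible, but none of the three load-bearing steps (producing the group, ruling out anything larger, and controlling lifts) is actually carried out, and one of the two proposed generators is impossible.
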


Therefore we have three order $2$ invertible bimodule categories over each
$\mathcal{AH}_i $, for $i=1,2,3 $. Full fusion rules for each of these bimodule
categories were given in the ArXiv data supplement to \cite{1202.4396}. We now show
that each of these $9$ bimodule categories admits two fusion category
structures.

\begin{thm}\label{thm:AHconst}
Each of the three non-trivial bimodule categories over each $\mathcal{AH}_i$,
$i=1,2,3$, is the odd component of exactly two $\mathbb{Z}/2\mathbb{Z}$-graded
extensions of $\mathcal{AH}_i$ . 
\end{thm}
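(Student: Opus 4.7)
The plan is to apply Theorem~\ref{ENOthm} directly, with $G = \ZZ/2\ZZ$ and $\cC = \mathcal{AH}_i$. Fix one of the three non-trivial invertible bimodule categories $\cM$ over $\mathcal{AH}_i$, and let $c \colon \ZZ/2\ZZ \to \operatorname{BrPic}(\mathcal{AH}_i)$ be the group homomorphism sending the generator to $[\cM]$. A $\ZZ/2\ZZ$-extension of $\mathcal{AH}_i$ has $\cM$ as its $1$-graded part precisely when its underlying grading is $c$, so it suffices to count the extensions with fixed grading $c$ and show that there are exactly two for each of the nine choices $(i, \cM)$.

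First, I would check that both obstructions of Theorem~\ref{ENOthm}(c) vanish. The primary obstruction $o_3(c) \in H^3(\ZZ/2\ZZ, \pi_1\BrPic(\mathcal{AH}_i))$ vanishes because its coefficient group is trivial: Corollary~\ref{noinvs} gives $\pi_1\BrPic(\mathcal{AH}_1) \cong \operatorname{Inv}(Z(\mathcal{AH}_1)) = 0$, and Morita invariance of the Drinfeld center, combined with the fact that the three categories $\mathcal{AH}_i$ share a common dual even part (and so are pairwise Morita equivalent), propagates this vanishing to $i = 2, 3$. The secondary obstruction $o_4(c, M) \in H^4(\ZZ/2\ZZ, \pi_2\BrPic(\mathcal{AH}_i))$ vanishes automatically since $\ZZ/2\ZZ$ is cyclic, as remarked in the introduction.

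With both obstructions trivial, Theorem~\ref{ENOthm}(b) parameterizes extensions over $c$ by pairs $(M, \alpha)$ belonging to the torsors $T^2_c$ and $T^3_{c,M}$. The first torsor has underlying group $H^2(\ZZ/2\ZZ, \pi_1\BrPic(\mathcal{AH}_i)) = 0$ and therefore consists of a single point, so $M$ is uniquely determined. The second torsor has underlying group $H^3(\ZZ/2\ZZ, \pi_2\BrPic(\mathcal{AH}_i)) = H^3(\ZZ/2\ZZ, \CC^\times)$, which standard group cohomology identifies with $\CC^\times[2] = \ZZ/2\ZZ$. Multiplying the torsor sizes gives exactly two extensions with $1$-graded part $\cM$, yielding six extensions per $\mathcal{AH}_i$ and eighteen overall.

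I do not expect any real obstacle beyond what has already been done: the substantive input is the vanishing of $\pi_1\BrPic(\mathcal{AH}_i)$, which has been secured in the discussion preceding the statement, and the remainder is a formal application of the ENO classification together with the cohomology of $\ZZ/2\ZZ$ with $\CC^\times$-coefficients. The only bookkeeping point worth flagging is that distinct bimodule classes $[\cM]$ yield distinct homomorphisms $c$, so the counts for the three bimodule categories do not overlap.
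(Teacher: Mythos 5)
Your proposal is correct and follows essentially the same route as the paper: vanishing of $o_3$ via triviality of $\operatorname{Inv}(Z(\mathcal{AH}_i))$ (Corollary \ref{noinvs} for $\mathcal{AH}_1$ plus Morita invariance of the center for $i=2,3$), vanishing of $o_4$ since $\ZZ/2\ZZ$ is cyclic, and the count of exactly two coming from the $\alpha$-torsor over $H^3(\ZZ/2\ZZ,\CC^\times)\cong\ZZ/2\ZZ$. The only point worth flagging is that you silently (and correctly) read the coefficient group of $T^3_{c,M}$ in Theorem \ref{ENOthm}(b) as $H^3(G,\pi_2\BrPic(\cC))$ rather than the $H^3(G,\pi_1\BrPic(\cC))$ printed there; this is what the paper's own count of two extensions also requires, since the latter group is trivial here.
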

\begin{proof}
Each non-trivial bimodule over $\mathcal{AH}_i$ gives a map from
$\mathbb{Z}/2\mathbb{Z}$ to the Brauer-Picard group of $\mathcal{AH}_i$.  We
want to show that this extends to a map of categorical $2$-groups; that is, we
want to show that the obstructions $o_3$ and $o_4$ from \ref{ENOthm} vanish.  It
is enough to show that the groups that these obstructions lie in, namely
$H^3(\mathbb{Z}/2\mathbb{Z}, \operatorname{Inv}(Z(\cC)))$ and
$H^4(\mathbb{Z}/2\mathbb{Z},\CC^\times)$, vanish.  As observed above,
$\Inv(Z(\mathcal{AH}_i ) )$ is trivial, so the first obstruction group vanishes.
 Finally $H^4(\mathbb{Z}/2\mathbb{Z},\CC^\times)$ vanishes since
$\mathbb{Z}/2\mathbb{Z}$ is cyclic.   There are two categories extending each
bimodule, owing to the choice of $\alpha$ in Theorem \ref{ENOthm}.
\end{proof}

Among these new fusion categories is one with an object of dimension
$\sqrt{\frac{9+\sqrt{17}}{2}} $. The fusion graph of this small object is
(\cite{1202.4396} ): 
$$ \hpic{AHp2dual} {0.6in} .$$ 

This graph is $3$-supertransitive (that is, it begins with a string of three
edges).  Outside the ADE series this is one of only two known fusion categories which is more than $2$-supertransitive.  The current record is the $4$-supertransitive fusion category $4442$ announced in \cite{1208.3637}.

\begin{rem}
It is easy to see that the Grothendieck ring of a $\mathbb{Z}/2\mathbb{Z}
$-extension of a fusion category is determined by the following data: (a) The
Grothendieck ring of the $0$-graded part; (b) the bimodule fusion rules for the
$1$-graded part considered as a bimodule category over the $0$-graded part; (c)
the dual data of the $1$-graded part. 

The full multiplication rules for each of the three non-trivial bimodule 
categories over each $\mathcal{AH}_i$, $i=1,2,3$ were given in the data
supplement to \cite{1202.4396}, available on the arxiv, so (a) and (b) are known for all of the
corresponding extensions.

Thus the only unknown information in the Grothendieck rings are the dual data of
the $1$-graded parts, which is given by an involution on the set of simple
objects in the bimodule category which preserves Frobenius-Perron dimension.
There are not very many possibilities, but we do not know how to compute these
involutions at this time. 
\end{rem}

\begin{rem}
It is natural to wonder what extensions there are of $\mathcal{AH}_i$ by the
Klein $4$-group.  This is somewhat more subtle as the obstruction group
containing $o_4$ does not vanish.
\end{rem}

\subsection{The Izumi $2^p1$ subfactors}\label{sec:Izumi-Xu}

Let $p$ be a prime, and let $R_p$ be the fusion ring generated by $g$ and $X$ with the relations $gX =
X = Xg$, $g^p = 1$, and $X^2 = \sum_i g^i + pX$ (all sums here go from $0$ to $p-1$).
 There is a left fusion module for $R_p$, which we will call $M_p^{left}$, with
basis $a, b, gb, g^2 b, \ldots , g^{p-1} b$ where $ga = a$, $X a =  \sum_i g^i
b$, and $X g^i b = a + \sum_i g^i b$.  Since $R_p$ is commutative, there is also
a right fusion module $M_p^{right}$ with the analogous fusion rules, and an
$R_p$ bimodule $M_p^{bim}$.

We will call $\cI_p$ a $\mathbb{Z}/p\mathbb{Z}$ Izumi near-group category if its
fusion ring is $R_p$.  The study of these fusion categories was initiated by
Izumi \cite{MR1832764} and has been continued recently by Evans-Gannon
\cite{1208.1500}.  

Since $g^i X \cong X$, the subcategory of $\cI_p$ generated by $g$ has $\mathrm{Vec}$ as a module
category, so this subcategory is $\mathrm{Vec}(\mathbb{Z}/p\ZZ)$.  In
particular, there is an algebra object $\mathbb{C}[\mathbb{Z}/p\ZZ]$ in $\cI_p$.

The category of left modules for this algebra is a right module category over
$\cI_p$ which we will call $\cM_p^{right}$.  It is easy to see that the fusion
rules for $\cM_p^{right}$ give the fusion module $M_p^{right}$.  Similarly we
have a left module category $\cM_p^{left}$.  Furthermore, any (left or right)
module category with fusion rules $M_p$  must be equivalent to $\cM_p$, because
the internal endomorphisms of $a$ must be $\mathbb{C}[\mathbb{Z}/p\ZZ]$.

Picking an object in a module category gives a subfactor.  The subfactor
corresponding to $g^i b$ has principal and dual-principal graphs which are
$2^p1$ spoke graphs (that is, they have $p$ spokes of length $2$ and one spoke
of length $1$).  Using \cite[\S3.3]{1202.4396} it is not difficult to see that
the resulting subfactor is independent of $i$, since the objects agree up to
tensoring with an invertible in the dual graph.  For general $p$, these $2^p1$ subfactors were first studied by Izumi.  When $p=2$ this is just the $E_6$ subfactor, and when $p=3$ there's an independent unpublished conformal inclusion construction due to Xu.  We will call this series the $2^p1$ subfactors, rather than Izumi subfactors, to avoid confusion with another series of
subfactors generalizing the Haagerup subfactor also constructed by Izumi in the
same paper (sometimes called Izumi-Haagerup subfactors).

From looking at the dual graph of the $2^p1$ subfactor, we see that the dual
category to $\cI_p$ over $\cM_p$ is another $\mathbb{Z}/p\mathbb{Z}$ near-group
category $\cI'_p$.  The fusion rules for $\cM_p$ as a $\cI_p$--$\cI'_p$ bimodule
are $M_p^{bim}$.

It is natural to wonder whether $\cI_p \cong \cI'_p$, and if so whether there's
a $\mathbb{Z}/2\mathbb{Z}$-extension of $\cI_p$ by $\cM_p$.  To that end, we
call $\cI_p$ self-dual if the dual even part is equivalent to the principal even
part.  In such a circumstance, choosing an isomorphism between the two even
parts endows $\cM_p$ with the structure of a bimodule over $\cI_p$.  In general
there may be several such choices, and there is no guarantee that any of them
have order $2$ in the Brauer-Picard group of $\cI_p$.  However, we have the
following lemma.

\begin{lma}
Suppose that $\cI_p$ is self-dual and that its outer automorphism group is
trivial.  Then the unique $\cI_p$ bimodule structure on $\cM_p$ has order $2$ in
the Brauer-Picard group.
\end{lma}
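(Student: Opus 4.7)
The plan splits into two parts: verifying uniqueness of the bimodule structure, and verifying the claimed order, each time leveraging the hypothesis $\operatorname{Out}(\cI_p) = 1$.

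\emph{Uniqueness.} A self-duality datum is a tensor equivalence $\psi : \cI_p \xrightarrow{\sim} \cI'_p$. Canonically, $\cM_p$ is an invertible $(\cI_p,\cI'_p)$-bimodule, and $\psi$ converts its right $\cI'_p$-action into a right $\cI_p$-action, yielding a bimodule $\cM_p^{\psi}$. Two choices $\psi, \psi'$ differ by composition with some $\alpha \in \operatorname{Eq}(\cI_p)$, and I would check directly that $\cM_p^{\psi'} \simeq M_\alpha \boxtimes_{\cI_p} \cM_p^{\psi}$ as $\cI_p$-bimodules. Since the image of $\alpha$ in $\operatorname{BrPic}(\cI_p)$ factors through $\operatorname{Out}(\cI_p)$, and the latter is trivial by hypothesis, $M_\alpha$ is the trivial outer bimodule, yielding uniqueness.

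\emph{Order.} Non-triviality is easy: the invertible object $g \in \cI_p$ has a fixed simple $a \in \cM_p$ (since $ga = a$), while $g$ acts freely on the simples of the regular module $\cI_p$. Hence $\cM_p \not\simeq \cI_p$ as left $\cI_p$-module categories, so $[\cM_p] \neq 1$ in $\operatorname{BrPic}(\cI_p)$. For $[\cM_p]^2 = 1$, I would again exploit $\operatorname{Out}(\cI_p) = 1$: this makes the forgetful map from $\operatorname{BrPic}(\cI_p)$ to equivalence classes of invertible left $\cI_p$-module categories injective, so it suffices to show $\cM_p \boxtimes_{\cI_p} \cM_p \simeq \cI_p$ merely as left $\cI_p$-modules. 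I would obtain this by chaining the universal Morita identity $\cM_p \boxtimes_{\cI'_p} \cM_p^{\mathrm{op}} \simeq \cI_p$ with an identification $\cM_p^{\psi} \simeq \cM_p^{\mathrm{op}}$ transported along $\psi$; the latter is plausible because $R_p$ is commutative, so $M_p^{left}$ and $M_p^{right}$ are canonically isomorphic as fusion modules, and the categorification of $M_p^{left}$ is unique (the internal endomorphisms of $a$ must be $\mathbb{C}[\ZZ/p\ZZ]$, as noted in the discussion preceding the lemma).

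The hard part will be promoting the fusion-level identification $M_p^{left} \simeq M_p^{right}$ to the bimodule-level statement $\cM_p^{\psi} \simeq \cM_p^{\mathrm{op}}$, since the self-duality $\psi$ twists both the left and right actions and the argument must track compatibility with the categorified structure. Once this identification is established, the composite $\cM_p^{\psi} \boxtimes_{\cI_p} \cM_p^{\psi} \simeq \cM_p \boxtimes_{\cI'_p} \cM_p^{\mathrm{op}} \simeq \cI_p$ closes the argument, and combined with non-triviality and uniqueness, shows that $\cM_p$ has order exactly $2$ in $\operatorname{BrPic}(\cI_p)$.
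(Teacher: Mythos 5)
Your overall strategy is the paper's: uniqueness of the categorified fusion module plus triviality of $\operatorname{Out}(\cI_p)$ to upgrade a one-sided module equivalence to a bimodule equivalence. But the step you flag as ``the hard part'' --- promoting $M_p^{left}\simeq M_p^{right}$ to a bimodule-level identification $\cM_p^{\psi}\simeq \cM_p^{\mathrm{op}}$ --- is exactly the step the paper's argument dissolves, and you already have the tool to dissolve it. The paper applies the injectivity-from-$\operatorname{Out}$ observation not to the pair $(\cM_p^{\boxtimes 2},\,\cI_p)$ but directly to the pair $(\cM_p,\,\cM_p^{-1})$: the inverse bimodule $\cM_p^{-1}$ has fusion rules $M_p$ (commutativity of $R_p$), hence by the uniqueness of the categorification (internal endomorphisms of $a$ must be $\CC[\ZZ/p\ZZ]$) it is equivalent to $\cM_p$ as a \emph{left} module category; two invertible bimodules agreeing as left module categories differ by tensoring with an outer bimodule, so $\cM_p\cong \cM_p^{-1}\boxtimes_{\cI_p}\cF$ with $\cF$ outer, hence trivial, giving $\cM_p\cong\cM_p^{-1}$ and so $\cM_p^{\boxtimes 2}\cong\cI_p$ at the bimodule level outright. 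In other words, the bimodule-level identification you worry about is not something to be constructed by hand from the fusion data; it is an immediate consequence of the same $\operatorname{Out}$-triviality argument you invoked one sentence earlier. Your non-triviality check (the fixed point $ga=a$ versus the free action on the regular module) and your uniqueness argument for the bimodule structure are both fine; rerouting the order-two step as above closes the proof with no remaining hard part.
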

\begin{proof}
Consider $\cM_p^{-1}$ as a $\cI_p$-bimodule.  Since this has  fusion rules
$M_p$, it must be equivalent to $\cM_p$ as a left (or as a right) module
category.  Since both bimodules are invertible, this implies that $\cM_p \cong
\cM_p^{-1} \boxtimes_{\cI_p} \cF$ as bimodules where $\cF$ is an outer
automorphism.  Since the outer automorphism group is trivial, we have $\cM_p^2
\cong \id$.
\end{proof}

\begin{rem}
In general, this argument shows that for any choice of identification of $\cI_p$
with $\cI'_p$ the resulting bimodule $\cM_p$ squares to an outer automorphism.
\end{rem}

\begin{thm}\label{thm:Cp}
Suppose that $p>2$ and that $\cI_p$ is a self-dual $\mathbb{Z}/p\mathbb{Z}$
near-group category with trivial outer automorphism group.  Then there exist
exactly two $\mathbb{Z}/2\mathbb{Z}$-extensions of $\cI_p$ by $\cM_p$.
\end{thm}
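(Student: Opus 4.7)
The plan is to apply Theorem \ref{ENOthm} to the group homomorphism $c:\mathbb{Z}/2\mathbb{Z}\to\operatorname{BrPic}(\cI_p)$ sending the nontrivial element to $\cM_p$; the preceding lemma guarantees this assignment gives a nontrivial order-two element. Lifting $c$ to a categorical $2$-group homomorphism, equivalently to an honest $\mathbb{Z}/2\mathbb{Z}$-graded extension, requires that the obstructions $o_3\in H^3(\mathbb{Z}/2\mathbb{Z},\pi_1\BrPic(\cI_p))$ and $o_4\in H^4(\mathbb{Z}/2\mathbb{Z},\pi_2\BrPic(\cI_p))$ vanish. Since $\pi_2\BrPic(\cI_p)\cong\mathbb{C}^\times$ and $\mathbb{Z}/2\mathbb{Z}$ is cyclic, $o_4$ vanishes automatically, exactly as in the treatment of Theorem \ref{thm:AHconst}.

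The main obstacle will be ruling out $o_3$. The plan is to show $\pi_1\BrPic(\cI_p)$ has odd order by invoking Corollary \ref{app-LES}, which realizes $\pi_1\BrPic(\cI_p)$ as an extension of a subgroup of $\operatorname{Inv}(\cI_p)\cong\mathbb{Z}/p\mathbb{Z}$ by $\Hom(U(\cI_p),\mathbb{C}^\times)$. The key input is therefore a computation of the universal grading group $U(\cI_p)$, and I claim it is trivial. Indeed, for any grading of $\cI_p$ by an abelian group $\Gamma$, writing $|g|=\beta$ and $|X|=\gamma$, the fusion relation $g\otimes X\cong X$ forces $\beta=0$, and the appearance of $X$ as a summand of $X\otimes X$ forces $2\gamma=\gamma$, i.e.\ $\gamma=0$. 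Consequently $\Hom(U(\cI_p),\mathbb{C}^\times)=1$, and Corollary \ref{app-LES} embeds $\pi_1\BrPic(\cI_p)$ into $\mathbb{Z}/p\mathbb{Z}$. Since $p$ is odd, this forces $\pi_1\BrPic(\cI_p)$ to have odd order, whence $H^n(\mathbb{Z}/2\mathbb{Z},\pi_1\BrPic(\cI_p))=0$ for every $n>0$ by the standard periodic cohomology of cyclic groups. In particular $o_3=0$.

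To finish I would count extensions. Once the obstructions vanish, Theorem \ref{ENOthm} parameterizes them by a torsor over the product $H^2(\mathbb{Z}/2\mathbb{Z},\pi_1\BrPic(\cI_p))\times H^3(\mathbb{Z}/2\mathbb{Z},\pi_2\BrPic(\cI_p))$. The first factor is trivial by the same odd-order argument just used, while the second is $H^3(\mathbb{Z}/2\mathbb{Z},\mathbb{C}^\times)\cong\mathbb{Z}/2\mathbb{Z}$. Multiplying gives exactly $1\times 2=2$ extensions of $\cI_p$ by $\cM_p$, as claimed. The only nontrivial step is the universal grading computation, and even that turns on the single combinatorial feature that $X$ appears inside its own square; every other piece reduces to routine cyclic-group cohomology.
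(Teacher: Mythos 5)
Your proposal is correct and follows essentially the same route as the paper's own proof: vanishing of $o_4$ by cyclicity, Corollary \ref{app-LES} plus triviality of the universal grading group to embed $\pi_1\BrPic(\cI_p)$ into $\operatorname{Inv}(\cI_p)\cong\mathbb{Z}/p\mathbb{Z}$, odd order killing the relevant $\mathbb{Z}/2\mathbb{Z}$-cohomology, and the count of two coming from the choice of $\alpha$ in Theorem \ref{ENOthm}. The only differences are cosmetic: you spell out the grading computation that the paper calls ``evident from the fusion rules,'' and you make explicit that the $H^2$-torsor contributes nothing to the count.
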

\begin{proof}
We have only to show that the homomorphism,
$c:\mathbb{Z}/2\mathbb{Z}\to \operatorname{BrPic}(\cI_p),$ corresponding to $\cM_p$ is
unobstructed, i.e. that $o_3$ and $o_4$ in Theorem \ref{ENOthm} vanish.  The
obstruction $o_4$ vanishes because $H^4(G,\CC^\times)=0$ whenever $G$ is cyclic.

By Corollary \ref{app-LES}, we have the exact sequence:
$$\ast \to Hom(U(\cI_p),\CC^\times)\to \operatorname{Inv}(Z(\cI_p))\to \operatorname{Inv}(\cI_p).$$
However, the categories $\cI_p$ admit no non-trivial gradings, as is evident from
the fusion rules.  So we have that $\operatorname{Inv}(Z(\cI_p))$ includes into
$\operatorname{Inv}(\cI_p)=\mathbb{Z}/p\mathbb{Z}$.  Thus the only remaining obstruction $o_3$
lies in $H^3(\mathbb{Z}/2\mathbb{Z},\operatorname{Inv}(Z(\cI_p))$.  This obstruction group vanishes since $\operatorname{Inv}(Z(\cI))$ is
either $\mathbb{Z}/p\mathbb{Z}$ or trivial (here we use that $p$ is odd, since $H^3(\mathbb{Z}/2\mathbb{Z},\mathbb{Z}/2\mathbb{Z})$ is nontrivial).
Thus there exists a $\mathbb{Z}/2\mathbb{Z}$-extension of $\cI_p$ by $\cM_p$.

There are exactly two distinct such extensions, owing to the choice of $\alpha$
in Theorem \ref{ENOthm}.
\end{proof}

\begin{rem}
It is not difficult to work out the fusion rules for this extension of $\cI_p$. 
Since $p$ is odd, at least one of the $g^i b$ is self-dual, so without loss of
generality $b$ is self-dual.  The rest is easy to work out. 
\end{rem}

The simplest near group categories with $p$ odd are the Izumi-Xu examples where $p=3$; there
are two inequivalent such fusion categories corresponding to a choice of complex
conjugation in the structure constants for the monoidal structure. In the
following we will fix a choice of conjugation and refer to ``the'' Izumi-Xu
category for $p=3$, but everything holds equally true for either choice.

 In this case there are already two constructions of
$\mathbb{Z}/2\mathbb{Z}$-extensions, first by Ostrik in the appendix to
\cite{MR2786219}, using constructions from affine Lie algebras and conformal
embeddings, and second by Morrison-Penneys \cite{1208.3637} using planar
algebras.  In order to recover their results we need to know that the Izumi-Xu
category is self-dual and has no outer automorphisms.  Both of these facts
follow from Han's thesis \cite{han-2221}.  Self-duality follows from uniqueness
of the $2221$ subfactor up to complex conjugacy, and no outer automorphisms
follows from the explicit quadratic relations satisfied by Han's generators (as
in \cite[Lemma 5.3]{MR2909758} and \cite[Thm. 4.9]{1202.4396}).

\begin{rem}
In fact, the Brauer-Picard $1$-groupoid of the Izumi-Xu fusion category $\cI_3$
is a single point with automorphism group $\mathbb{Z}/2\mathbb{Z}$.  Following
the approach in \cite{MR2909758}, the only possible minimal algebra objects in
the Izumi-Xu fusion category are $1$ and $1+g+g^2$, and those each have unique
algebra structures.  Thus, the only simple module categories are $\cI_3$ and
$\cM_3$.  Since $\cI_3$ is self-dual and has no outer automorphisms, the only
nontrivial $\cI_3$--$\cD$ bimodule is $\cM_3$ where $\cD \cong \cI_3$.
\end{rem}

We are optimistic that the other near group categories coming from Izumi and
Evans-Gannon also give $\mathbb{Z}/2\mathbb{Z}$-extensions.  In theory, it
should be possible to work out whether these categories are self-dual and what
their outer automorphism groups are from the detailed descriptions given by
Izumi and Evans-Gannon, but in practice this may be somewhat difficult to
extract.

\section{The homotopy fiber sequence} \label{sec4}

We begin by recalling the definitions of tensor functor and natural
transformation of tensor functors, primarily to fix notation.

A tensor functor $(F,J_F): \cC\to \cD$ is a functor $F$ of abelian categories,
together with a natural isomorphism,
$$J_F: F\circ\ot_\cC \xrightarrow{\sim} \ot_\cD\circ F\bt F,$$
satisfying a certain cocycle condition\footnote{To ease notation, we adopt the usual convention of refering to tuples $(F,\cdots)$, consisting of a functor with structural isomorphisms (such as tensor or module functor structure) simply by ``$F$".}.  More precisely, $J_F$ consists of a
family of isomorphisms,
$$J_F: F(X\ot Y)\xrightarrow{\sim} F(X)\ot F(Y),$$
natural in $X,Y\in \cC$, and such that the following diagram commutes:

$$\xymatrix{ F((X\ot Y) \ot Z) \ar[r]^{F ( \alpha_{\cC})} \ar[d]_{J_F} & F(X\ot
(Y \ot Z))\ar[d]^{J_F}\\ F(X \ot Y)\ot F(Z) \ar[d]_{J_F} & F(X)\ot F(Y \ot Z)
\ar[d]^{J_F} \\ (F(X) \ot F(Y)) \ot F(Z) \ar[r]^{\alpha_{\cD}} & F(X) \ot (F(Y)
\ot F(Z) )}$$

where $\alpha_{\cC} $ and $\alpha_{\cD} $ are the associators of $\cC $ and $\cD
$ respectively.  In particular, we note that the tensor structure $J_F$ is
additional data packaged with $F$.

A tensor functor $(F,J_F)$ is an equivalence if $F$ is an equivalence of abelian
categories.
\begin{mydef}A natural transformation $\theta: F\to G$ between tensor functors
$F,G:\cC\to\cD$ is monoidal if for all $X,Y\in \cC$, the following diagram
commutes:
$$\xymatrix{ F(X\ot Y) \ar[r]^{\theta_{X\ot Y}} \ar[d]_{J_F} & G(X\ot
Y)\ar[d]^{J_G}\\ F(X)\ot F(Y)\ar[r]^{\theta_X\ot\theta_Y}& G(X)\ot G(Y)}$$
\end{mydef}

Morphisms between module categories and bimodule categories are defined in an
analagous way; namely, they are functors of abelian categories along with certain
natural transformations making certain diagrams commute. An equivalence of
module categories is a module functor whose underlying functor is an equivalence
of abelian categories.  See \cite{MR1976459} for details.

We now construct the functors $F_-$ and $M_-$ from Section
\ref{sec:MainResults}.  For clarity of exposition, we suppress associators for
the remainder of this section.
\subsection{Construction of $F_-$}
Given $g\in\Inv(\cC)$, we define $F_g\in\Eq(\cC)$ as follows.  For $X, Y\in
\cC$, and $\rho:X\to Y$, we set:
$$F_g(X):=g\ot X\ot g^{-1},\quad F_g(\rho):=\id_g\ot\rho\ot\id_{g^{-1}}.$$
We equip $F_g$ with the tensor structure:{\small
$$F_g(X)\ot F_g(Y) = g\ot X\ot g^{-1}\ot g \ot Y\ot g^{-1}
\xrightarrow[ev_g]{\sim} g\ot X\ot Y\ot g^{-1}=F_g(X\ot Y).$$}
The assignment $g\mapsto F_g$, extends to a functor, 
$$F_-:\Inv(\cC)\to\Eq(\cC),$$ by assigning to any $\phi:g\xrightarrow{\sim} h$
the natural isomorphism,
$$F_g = g\ot -\ot g^{-1} \xrightarrow[\phi\ot\id\ot(\phi^{-1})^*]{\sim} h\ot
-\ot h^{-1} = F_h.$$

The obvious natural isomorphisms, $F_{g\otimes h}\cong F_g\circ F_h$, endow
$F_-$ with a monoidal structure.

\subsection{Construction of $M_-$}
For every $F\in \Eq(\cC)$, we define an invertible $\cC$-$\cC$-bimodule category
$M=M_F$, by letting $M=\cC$ as an abelian category, and defining, for
$X,Y\in\cC$, $m\in M$:
$$X\ot_{M_F} m \ot_{M_F} Y := X\ot m\ot F(Y).$$
Here, unadorned tensor products denote the tensor product in $\cC$.
For every natural isomorphism $\theta: F\to G$, we have a $\cC$-$\cC$-bimodule
auto-equivalence $(\id_\cC,J_\theta):M_F\to M_G$, which is the identity functor
of $\cC$, and where $J_\theta:F(Y)\to G(Y)$ is applied before tensoring on the
right.

This gives a functor,
$$M_-:\Eq(\cC)\to \Out(\cC),$$
$$ F\mapsto M_F$$
of $2$-categories, where we regard $\Eq(\cC)$ as a categorical $2$-group with
strict associativity of $1$-morphisms.

We have a bi-additive functor of $\cC$-$\cC$-bimodules,
$$\hat{\ot}:M_F\bt M_G\to M_{F\circ G}$$
$$ m\bt n \mapsto m \ot F(n),$$
and isomorhpisms,
$$M\hat{\ot} (A\ot N) = M \ot \phi(A\ot N) \xrightarrow{J_F} M\ot\phi(A)\ot
\phi(N) = (M\ot A)\hat{\ot} N.$$
natural in $M,A$ and $N$.  Thus $\hat{\ot}$ defines a functor,
$$\iota_{F,G}: M_F \ot_\cC M_G\to M_{F\circ G},$$
of abelian categories, which is clearly an equivalence.  Moreover, we have an
isomorphism of functors,
$$J_{F,G}: \ot_{M_{F\circ G}}\circ\iota_{F,G} \to
\iota_{F,G}\circ\ot_{M_F\ot_\cC M_G},$$
$$ X\ot m\ot F(n)\ot F(G(Y)) \xrightarrow{J_F^{-1}} X\ot m\ot F(n\ot G(Y)).$$
Thus, $M_-$ induces a homomorphism of categorical $2$-groups, which we also
denote $M_-$.

\subsection{The homotopy fiber of $M$}

Let $p:\mathcal{G}\to\mathcal{H}$ be a homomorphism of categorical $n$-groups. 
The homotopy fiber, $p^{-1}(X)$, of $X\in\mathcal{H}$ has as its objects pairs
$(Y\in\mathcal{G}, \phi:X\xrightarrow{\sim} p(Y))$.  Morphisms are those
inherited from $\mathcal{H}$; that is:
$$\Hom_{p^{-1}(X)}((Y_1,\phi_1),(Y_2,\phi_2) :=
\Hom_\mathcal{H}(\phi_1,\phi_2).$$
Because all objects and morphisms in $\mathcal{H}$ are invertible, Quillen's
Theorem B \cite{MR0338129} asserts $p^{-1}(X)\to \mathcal{G}\to\mathcal{H}$ is a
homotopy fiber sequence, for any object $X\in \mathcal{H}$.

In this section, we construct an equivalence between the full subcategory
$\Inv(\cC)$ of invertible objects in $\cC$, and the homotopy fiber $M^{-1}(\cC)$
over the trivial $\cC$-$\cC$-bimodule $\cC$.

We have equivalences of bimodule categories,
$$\Phi_g:\cC\to M_{F_g}$$
$$ X\mapsto X\ot g^{-1},$$
equipped with tensor structure,{\small
$$X\ot \Phi_g(Y) \ot F_g(Z) = X\ot Y\ot g^{-1}\ot g\ot Z \ot g^{-1}
\xrightarrow[ev_g]{\sim} X\ot Y\ot Z\ot g^{-1} =\Phi_g(X\ot Y\ot Z).$$}

We thus construct a functor:

$$M_{F_{-}}:\Inv(\cC)\to M^{-1}(\cC).$$
$$M_{F_{-}}(g):= (M_{F_g},\Phi_g).$$
\begin{thm} The functor $M_{F_-}$ is an equivalence.\label{fib-thm}\end{thm}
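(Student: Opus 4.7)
The plan is to verify that $M_{F_-}$ is an equivalence of categorical $1$-groups by checking essential surjectivity together with fully faithfulness. The guiding principle is that a bimodule equivalence $\phi\colon\cC\to M_F$ is determined, up to canonical isomorphism, by its value on the tensor unit: if $\phi(\mathbf{1})=g^{-1}$, then left-module coherence forces $\phi(X)\cong X\otimes g^{-1}=\Phi_g(X)$, and right-module coherence then forces $F\cong F_g$.

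For essential surjectivity, given $(F,\phi)\in M^{-1}(\cC)$, I set $g := \phi(\mathbf{1})^{-1}$. Since $M_F$ coincides with $\cC$ as a left $\cC$-module category, $\phi$ is a left-module self-equivalence of $\cC$; such a functor must send $\mathbf{1}$ to a tensor-invertible object, so $g\in\Inv(\cC)$. The left-module coherence gives $\phi(X)\cong X\otimes g^{-1}=\Phi_g(X)$ naturally in $X$, and comparing the two expressions for $\phi(X\otimes Y)$ obtained from the left and the right module structures yields a natural isomorphism $F(Y)\cong g\otimes Y\otimes g^{-1}=F_g(Y)$. The main obstacle is verifying that this isomorphism is \emph{monoidal} and assembles with the identification $\phi\cong\Phi_g$ into a morphism in the homotopy fiber. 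This is a diagram chase: the middle-associativity axiom for the bimodule functor $\phi$, evaluated on $\phi(X\otimes Y\otimes Z)$, forces the monoidal-naturality square relating $F\cong F_g$ to the tensor constraints $J_F$ and $J_{F_g}$, and simultaneously supplies the $2$-isomorphism witnessing fiber compatibility.

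For fully faithfulness, a morphism $g\to h$ in $\Inv(\cC)$ is an isomorphism $\alpha\colon g\xrightarrow{\sim} h$ in $\cC$, while a morphism $(F_g,\Phi_g)\to(F_h,\Phi_h)$ in $M^{-1}(\cC)$ is a pair $(\beta,\eta)$ consisting of a monoidal natural isomorphism $\beta\colon F_g\Rightarrow F_h$ and a bimodule $2$-isomorphism $\eta\colon M_\beta\circ\Phi_g\Rightarrow\Phi_h$. Evaluating $\eta$ at $\mathbf{1}$ produces an isomorphism $g^{-1}\xrightarrow{\sim} h^{-1}$, and the bimodule coherence of $\eta$ forces $\beta$ to be determined by this same data; conversely, any $\alpha$ canonically induces such a pair via the functoriality of $F_-$ and $\Phi_-$. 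These assignments are mutually inverse, giving the desired bijection on Hom-sets and completing the proof.
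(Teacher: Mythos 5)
Your proposal is correct and follows essentially the same route as the paper: essential surjectivity by noting that a left-module auto-equivalence of $\cC$ is $-\ot g^{-1}$ for $g^{-1}=\phi(\mathbf{1})$ and that the right-module structure then forces $F\cong F_g$, and full faithfulness by reducing a fiber morphism to its value at $\mathbf{1}$. Your packaging of a fiber morphism as a pair $(\beta,\eta)$ and your explicit attention to the monoidality of $F\cong F_g$ are, if anything, slightly more careful than the paper's treatment, but the argument is the same.
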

\begin{proof}
Let $(F,J_F)\in\Eq(\cC)$, and suppose we have an equivalence,
$$(\phi,J_\phi):\cC\to M_F,$$ of bimodule categories.  Clearly, $\phi$
induces an auto-equivalence of $\cC$ as a left-module category (recall that
$M_F=\cC$ canonically, as a left $\cC$-module category). It is then routine to
see that $\phi= -\ot g^{-1}$, where $g^{-1}=\phi(\mathbf{1})\in\Inv(\cC)$.

The tensor data $J_\phi$ therefore consists of a family of isomorphisms,
$$X\ot Y \ot Z \ot g = \phi(X\ot Y\ot Z) \xrightarrow[J_\phi]{\sim}  X\ot
\phi(Y)\ot F(Z) = X\ot Y \ot g \ot F(Z),$$
which is natural in $X, Y, Z \in \cC$.  In particular, taking $X,Y =
\mathbf{1}$, we obtain a natural isomorphism,
$$F \cong g\ot - \ot g^{-1}=F_g.$$
We thus have a canonical isomorphism $(M_F,\phi)\cong (M_{F_g},\Phi_g)$ in the
homotopy fiber, so that the induced functor $F_-:\Inv(\cC)\to M_\cC$ is
essentially surjective.  It remains to show that it is fully faithful.  We need
to show that every equivalence, $\theta: M_{F_g}\xrightarrow{\sim} M_{F_h}$, of
bimodule categories making the diagram:
$$\xymatrix{M_{F_g}\ar[dr]_{\Phi_g^{-1}}\ar[rr]^{\theta}&&
M_{F_h}\ar[dl]^{\Phi_h^{-1}}\\&\cC&}$$
commutative is in fact induced by a unique isomorphism $g\cong h$.  The
isomorphism required for fullness is
$g=F_g(\mathbf{1})\xrightarrow{\theta_\mathbf{1}} F_h(\mathbf{1})=h$.  The
bimodule compatibility condition for $\theta$ implies that all the $\theta_X$
are determined by $\theta_{\mathbf{1}}$, thus implying faithfulness.
\end{proof}

Theorem \ref{fib-thm} now implies \ref{homotopy-thm}, as it allows us to
identify $\Inv(\cC)$ with a homotopy fiber of the functor $M_-$.  Corollary
\ref{maincor} follows from the long exact sequence in homotopy groups for a
fibration.

\begin{proof}[Proof of Corollary \ref{app-LES}]
All that remains is to compute the connecting homomorphisms
$\delta_2:\pi_2\Out(\cC)\to\pi_1\Inv(\cC)$, and
$\delta_1:\pi_1\Out(\cC)\to\pi_0\Inv(\cC).$

To this end, we identify $\pi_1\Out(\cC)$ with the automorphisms of the unit
object of $\Out(\cC)$, namely the $\cC$-$\cC$-bimodule auto-equivalences of the
regular bimodule $\cC$.  Every such auto-equivalence, $\phi$, is isomorphic to
the functor of tensoring by a central object $g$.  Thus in $\Eq(\cC)$, we have
an isomorphism $\phi \cong -\ot g$, and thus the connecting homomorphism
$\delta_1:\pi_1\Out(\cC)\to\pi_0\Inv(\cC)$ is simply the forgetful functor
$For:\Inv(Z(\cC))\to \Inv(\cC)$.  Similarly, $\delta_2$ is the induced homomorphism
$For:Aut_{Z(\cC)}(\mathbf{1},\mathbf{1})\to Aut(\mathbf{1},\mathbf{1})$.  In particular, $\delta_2$ is an isomorphism, so that
the first two terms of the sequence split off, yielding Corollary \ref{app-LES}.
\end{proof}

\newcommand{\urlprefix}{}
\bibliographystyle{alpha}
\bibliography{bibliography}

\newcommand{\noopsort}[1]{}\def\cprime{$'$} \def\cprime{$'$} \def\cprime{$'$}
\begin{thebibliography}{{Han}11}

\bibitem[AG11]{MR2812458}
Marta Asaeda and Pinhas Grossman.
\newblock A quadrilateral in the {A}saeda-{H}aagerup category.
\newblock {\em Quantum Topol.}, 2(3):269--300, 2011.
\newblock \arxiv{1006.2524}, \mathscinet{MR2812458}.

\bibitem[AH99]{MR1686551}
Marta Asaeda and Uffe Haagerup.
\newblock Exotic subfactors of finite depth with {J}ones indices
  {$(5+\sqrt{13})/2$} and {$(5+\sqrt{17})/2$}.
\newblock {\em Comm. Math. Phys.}, 202(1):1--63, 1999.
\newblock \mathscinet{MR1686551} \doi{10.1007/s002200050574}
  \arxiv{math.OA/9803044}.

\bibitem[BL04]{MR2068521}
John~C. Baez and Aaron~D. Lauda.
\newblock Higher-dimensional algebra. {V}. 2-groups.
\newblock {\em Theory Appl. Categ.}, 12:423--491, 2004.
\newblock \mathscinet{MR2068521} \arxiv{0307200}.

\bibitem[CMS11]{MR2786219}
Frank Calegari, Scott Morrison, and Noah Snyder.
\newblock Cyclotomic integers, fusion categories, and subfactors.
\newblock {\em Comm. Math. Phys.}, 303(3):845--896, 2011.
\newblock With an appendix by Victor Ostrik. \mathscinet{MR2786219},
  \arxiv{1004.0665}.

\bibitem[EG12]{1208.1500}
D.~E. {Evans} and T.~{Gannon}.
\newblock {Near-group fusion categories and their doubles}.
\newblock {\em ArXiv e-prints}, August 2012.
\newblock \arxiv{1208.1500}.

\bibitem[ENO05]{MR2183279}
Pavel Etingof, Dmitri Nikshych, and Viktor Ostrik.
\newblock On fusion categories.
\newblock {\em Ann. of Math. (2)}, 162(2):581--642, 2005.
\newblock \arxiv{math/0203060}, \mathscinet{MR2183279}.

\bibitem[ENO10]{MR2677836}
Pavel Etingof, Dmitri Nikshych, and Victor Ostrik.
\newblock Fusion categories and homotopy theory.
\newblock {\em Quantum Topol.}, 1(3):209--273, 2010.
\newblock \arxiv{0909.3140}, \mathscinet{MR2677836}.

\bibitem[GS12a]{1202.4396}
P.~{Grossman} and N.~{Snyder}.
\newblock {The Brauer-Picard group of the Asaeda-Haagerup fusion categories}.
\newblock {\em ArXiv e-prints}, February 2012.
\newblock \arxiv{1202.4396}.

\bibitem[GS12b]{MR2909758}
Pinhas Grossman and Noah Snyder.
\newblock Quantum subgroups of the {H}aagerup fusion categories.
\newblock {\em Comm. Math. Phys.}, 311(3):617--643, 2012.
\newblock \arxiv{1102.2631}, \mathscinet{MR2909758}.

\bibitem[{Han}11]{han-2221}
R.~{Han}.
\newblock {A Construction of the ''2221'' Planar Algebra}.
\newblock {\em ArXiv e-prints}, February 2011.
\newblock \arxiv{1102.2052}.

\bibitem[Izu01]{MR1832764}
Masaki Izumi.
\newblock The structure of sectors associated with {L}ongo-{R}ehren inclusions.
  {II}. {E}xamples.
\newblock {\em Rev. Math. Phys.}, 13(5):603--674, 2001.
\newblock \mathscinet{MR1832764} \doi{10.1142/S0129055X01000818}.

\bibitem[Jon83]{MR696688}
Vaughan F.~R. Jones.
\newblock Index for subfactors.
\newblock {\em Invent. Math.}, 72(1):1--25, 1983.
\newblock \mathscinet{MR696688} \doi{10.1007/BF01389127}.

\bibitem[MP12]{1208.3637}
S.~{Morrison} and D.~{Penneys}.
\newblock {Constructing spoke subfactors using the jellyfish algorithm}.
\newblock {\em ArXiv e-prints}, August 2012.
\newblock \arxiv{1208.3637}.

\bibitem[Ost03a]{MR1976459}
Victor Ostrik.
\newblock Module categories, weak {H}opf algebras and modular invariants.
\newblock {\em Transform. Groups}, 8(2):177--206, 2003.
\newblock \mathscinet{MR1976459} \arxiv{0111139}.

\bibitem[Ost03b]{MR1976233}
Viktor Ostrik.
\newblock Module categories over the {D}rinfeld double of a finite group.
\newblock {\em Int. Math. Res. Not.}, (27):1507--1520, 2003.
\newblock \mathscinet{MR1976233},\arxiv{0202130}.

\bibitem[PP86]{MR860811}
Mihai Pimsner and Sorin Popa.
\newblock Entropy and index for subfactors.
\newblock {\em Ann. Sci. \'Ecole Norm. Sup. (4)}, 19(1):57--106, 1986.

\bibitem[Qui73]{MR0338129}
Daniel Quillen.
\newblock Higher algebraic {$K$}-theory. {I}.
\newblock In {\em Algebraic {$K$}-theory, {I}: {H}igher {$K$}-theories ({P}roc.
  {C}onf., {B}attelle {M}emorial {I}nst., {S}eattle, {W}ash., 1972)}, pages
  85--147. Lecture Notes in Math., Vol. 341. Springer, Berlin, 1973.
\newblock \mathscinet{MR0338129}.

\end{thebibliography}
\end{document}